\newtheorem{theorem}{Theorem}[section]
\newtheorem{lemma}[theorem]{Lemma}
\newtheorem{corollary}[theorem]{Corollary}
\newtheorem{proposition}[theorem]{Proposition}
\newtheorem{conjecture}[theorem]{Conjecture}
\theoremstyle{definition}
\newtheorem{definition}[theorem]{Definition}
\theoremstyle{remark}
\newtheorem{remark}[theorem]{Remark}
\numberwithin{equation}{section}
\DeclareMathOperator{\Kind}{\textit{K}-index}
\DeclareMathOperator{\Gind}{\textit{G}-index}
\DeclareMathOperator{\Ad}{Ad}
\DeclareMathOperator{\Spin}{Spin}
\DeclareMathOperator{\SO}{SO}
\DeclareMathOperator{\Res}{Res}
\DeclareMathOperator{\DInd}{D-Ind}
\DeclareMathOperator{\KInd}{K-Ind}
\DeclareMathOperator{\pHInd}{pH-Ind}
\DeclareMathOperator{\pHPS}{pHamPS}
\DeclareMathOperator{\CpHPS}{CpHamPS}
\DeclareMathOperator{\CHPS}{CHamPS}
\DeclareMathOperator{\ECpHPS}{ECpHamPS}
\DeclareMathOperator{\MWR}{MWR}
\DeclareMathOperator{\se}{se}
\begin{document}

\newcommand{\D}{\slash \!\!\!\! D}

\newcommand{\g}{\mathfrak{g}}
\newcommand{\h}{\mathfrak{h}}
\newcommand{\kk}{\mathfrak{k}}
\newcommand{\m}{\mathfrak{m}}
\newcommand{\p}{\mathfrak{p}}
\newcommand{\aaa}{\mathfrak{a}}
\newcommand{\n}{\mathfrak{n}}
\newcommand{\gse}{\mathfrak{g}^*_{\mathrm{se}}}
\newcommand{\kse}{\mathfrak{k}^*_{\mathrm{se}}}
\newcommand{\Ldom}{\Lambda^*_+}

\newcommand{\Gps}{\hat G_{\text{ps}}}

\newcommand{\clambda}{c(\lambda)}
\newcommand{\pilamrho}{[\pi^p_{\lambda+\rho}]}
\newcommand{\pidmu}{[\pi^d_{\mu}]}
\newcommand{\Rdmu}{R_G^{d, \mu}}

\newcommand{\C}{\mathbb{C}}
\newcommand{\R}{\mathbb{R}}
\newcommand{\Z}{\mathbb{Z}}
\newcommand{\N}{\mathbb{N}}

\newcommand{\B}{\mathcal{B}}
\newcommand{\K}{\mathcal{K}}
\renewcommand{\O}{\mathcal{O}}
\newcommand{\E}{\mathcal{E}}
\newcommand{\SSS}{\mathcal{S}}
\newcommand{\X}{\mathfrak{X}}
\newcommand{\HH}{\mathcal{H}}

\newcommand{\KK}{K \! K}
\newcommand{\Bigwedge}{\textstyle{\bigwedge}}

\newcommand{\QSpin}{Q_{\Spin}}

\newcommand{\ddt}{\left. \frac{d}{dt}\right|_{t=0}}

\title
{Quantisation of presymplectic manifolds, $K$-theory and group representations}


\author{Peter Hochs}



\date{}



\maketitle

\begin{abstract}
Let $G$ be a semisimple Lie group with finite component group, and let $K<G$ be a maximal compact subgroup. We obtain a quantisation commutes with reduction result for actions by $G$ on manifolds of the form $M = G\times_K N$, where $N$ is a compact prequantisable Hamiltonian $K$-manifold. The symplectic form on $N$ induces a closed two-form on $M$, which may be degenerate. We therefore work with presymplectic manifolds, where we take a presymplectic form to be a closed two-form. 
For complex semisimple groups and semisimple groups with discrete series, the main result reduces to 
 results with a more direct representation theoretic interpretation. The result for the discrete series is a generalised version of an earlier result by the author. In addition,  the generators of the $K$-theory of the $C^*$-algebra of a semisimple group are realised as quantisations of fibre bundles over suitable coadjoint orbits. 
\end{abstract}

\tableofcontents


\section{Introduction}

\subsection{Quantisation and reduction}

Let  $G$ be a Lie group acting on a symplectic manifold $(M, \omega)$ in Hamiltonian fashion. Let $\pi$ be an irreducible representation of $G$ associated to (the coadjoint orbit through) an element $\xi \in \g^*$, the dual of the Lie algebra of $G$. Then 
the \emph{quantisation commutes with reduction} principle states that
\[
R^{\pi}_G \bigl( Q_G(M, \omega) \bigr)= Q(M_{\xi}, \omega_{\xi}),
\]
where $Q_G$ and $Q$ denote geometric quantisation, and
 the quantum reduction map $R^{\pi}_G$ is defined by taking multiplicities of $\pi$. Furthermore, $(M_{\xi}, \omega_{\xi})$ is the Marsden--Weinstein reduction \cite{MW} of $(M, \omega)$ at $\xi$, i.e.\ $M_{\xi} = \Phi_M^{-1}(\xi)/G_{\xi}$, with $\Phi_M: M \to \g^*$ a momentum map for the action. If $M$ and $G$ are compact, this principle has been given explicit meaning, and been proved, by Meinrenken and others \cite{Meinrenken, MS, Paradan1, TZ}. The geometric quantisation of $(M, \omega)$ is then defined as the equivariant index of a Dirac operator $ \D_M^L$ on $M$, coupled to a line bundle $L \to M$ with Chern class $[\omega]$:
\begin{equation} \label{eq def quant cpt}
Q_G(M, \omega) = \Gind \left( \D_M^L\right).
\end{equation}

For $M$ and/or $G$ noncompact, results have been achieved in two directions. For compact $G$ and noncompact $M$, there are results by  Ma and Zhang \cite{MaZ, MaZ2} and by Paradan \cite{Paradan2, Paradan3, Paradan4}. If $M$ and $G$ are both allowed to be noncompact, but the orbit space $M/G$ is still compact, Landsman \cite{Landsman} has proposed a definition based on the analytic assembly map $\mu_M^G$ used in the Baum--Connes conjecture \cite{BCH}:
\begin{equation} \label{eq def quant}
Q_G(M, \omega) = \mu_M^G\left[\D_M^L\right] \in K_0(C^*_{(r)}(G)).
\end{equation}
Here $\left[\D_M^L\right] $ denotes the $K$-homology class  \cite{HR} of the Dirac operator $\D_M^L$ on $M$, and $K_0(C^*_{(r)}(G))$ is the $K$-theory group of the (full or reduced) group $C^*$-algebra of $G$. This definition reduces to \eqref{eq def quant cpt} in the compact case. Results based on this definition have been achieved by Landsman and the author  \cite{HL, HochsDS}, and by Mathai and Zhang \cite{MZ}.

The paper \cite{HochsDS} contains a result about reduction at discrete series representations of real semisimple Lie groups, in terms of Landsman's definition of quantisation. This result was based on a quantisation commutes with  \emph{induction} principle, which allowed us to deduce the result from the compact case.



\subsection{Presymplectic manifolds}

The key assumption in \cite{HochsDS} that made the Hamiltonian induction construction used there possible, is that the momentum map $\Phi_M: M \to \g^*$ of the action in question takes values in the strongly elliptic set $\g^*_{\text{se}} \subset \g^*$ of elements with compact stabilisers under the coadjoint action. This was shown to imply that 
\begin{equation} \label{eq M fibre}
M = G\times_K N,
\end{equation}
for some Hamiltonian $K$-manifolds $N$. 
However, many Lie groups, such as complex semisimple ones, have empty strongly elliptic sets. 
Therefore, we only assume that $M$ has the form \eqref{eq M fibre} in this paper, so we are able to treat more general situations than the one considered in \cite{HochsDS}. Because of this weaker assumption,  we will lose some desirable properties of Hamiltonian induction. The most important of these is nondegeneracy of induced symplectic forms. 
We will therefore consider quantisation of \emph{presymplectic manifolds}.

If one leaves the setting of symplectic manifolds and Hamiltonian group actions, a natural question is how to define presymplectic manifolds precisely, i.e.\ what structure should be retained to define meaningful notions of quantisation and reduction. In \cite{CdSKT}, a hierarchy of structures that can be used to define geometric quantisation is discussed: complex structures, stable complex structures and $\Spin^c$-structures. Of these, $\Spin^c$-structures are the most generally applicable. The quantisation commutes with induction principle in \cite{HochsDS} includes induction of $\Spin^c$-structures, so that the induced manifolds can indeed be quantised.

While a $\Spin^c$-structure is enough to define a Dirac operator, and hence geometric quantisation, other structures are needed to define reduction. In \cite{Bottacin, EEMLRR}, it is shown that the notions of momentum map and symplectic reduction generalise directly to the presymplectic setting. In \cite{EEMLRR}, a presymplectic form is defined as a closed two-form with constant rank, whereas in \cite{Bottacin}, the constant rank assumption is dropped. The latter definition of a presymplectic form, simply as a closed two-form, is also used for example in \cite{CdSKT,  GNH, GK1, GK2, KT}. This definition is compatible with the Hamiltonian induction procedure we use (called \emph{pre-}Hamiltonian induction in this generalised setting), and  for that reason, we will define a presymplectic manifold as a  smooth manifold equipped with a closed two-form.  Since the presymplectic manifolds we consider may be odd-dimensional, the quantisation of these spaces may end up in \emph{odd} $K$-theory. Then the even $K$-theory group $K_0(C^*_{(r)}(G))$ in \eqref{eq def quant} should be replaced by $K_1(C^*_{(r)}(G))$.

Previous results on presymplectic manifolds and their quantisations have been obtained by Cannas da Silva, Grossberg, Karshon and Tolman \cite{CdSKT, GK2, KT}, while Grossberg and Karshon \cite{GK1} have given applications to representation theory. Gotay, Nester and Hind \cite{GNH} have applied the theory to constrained classical mechanics (they discuss Maxwell theory as an example), noting that presymplectic manifolds arise naturally as submanifolds of symplectic manifolds, defined by physical contraints.


\subsection*{Acknowledgements}

The author would like to thank Mathai Varghese and the referee, for useful discussions and comments. 

This research was supported by the Alexander von Humboldt foundation,  through a Marie Curie fellowship of the European Union, and by the Australian Research Council, through Discovery Project DP110100072.


\subsection*{Notation}
We will write $d_X$ for the dimension of a manifold  $X$. In particular, we will write $d := d_{G/K}$. Where appropriate, these dimensions should be interpreted modulo 2.


\section{Results}

We state a quantisation commutes with reduction result, and specialise this result to reduction at connected components of the principal series of complex semisimple groups, and at discrete series representations of real semisimple groups. As a special case, we show how a canonical generator of the $K$-theory of the reduced $C^*$-algebra of a semisimple group can be obtained as the quantisation  of a \emph{fibre bundle} over the associated coadjoint orbit. This coadjoint orbit is a symplectic manifold, but the pullback of the symplectic form to the total space of this fibre bundle is only presymplectic.

\subsection{The main result} \label{sec results}


Let $G$ be an almost connected semisimple Lie group, with a maximal compact subgroup $K<G$. Let $\g = \kk \oplus \p$ be a Cartan decomposition at the Lie algebra level, and write $d:= \dim G/K = \dim \p$. Chabert, Echterhoff and Nest proved the Connes--Kasparov conjecture for all almost connected Lie groups \cite{CEN}. It states that
the \emph{Dirac induction} map
\[
\DInd_K^G: R(K) \to K_d(C^*_r(G))
\]
is an isomorphism from the representation ring $R(K)$ of $K$ onto the $K$-theory group $K_d(C^*_r(G))$. (The other $K$-theory group is zero.) See Subsection \ref{sec dirac ind} for more information. 
 Let $\Ldom$ be the set of dominant weights for $K$, with respect to some maximal torus and a choice of positive roots. Let $\rho_c$ be half the sum of these positive compact roots. 
For $\lambda \in \Ldom$, we write
\[
\clambda := \DInd_K^G[V_{\lambda}] \quad \in K_d(C^*_r(G)),
\]
with $V_{\lambda}$ the irreducible representation of $K$ with highest weight $\lambda$. Then $K_d(C^*_r(G))$ is the free abelian group generated by these classes.

 Let $(N, \nu)$ be a compact, $\Spin^c$-prequantisable Hamiltonian $K$-manifold, with momentum map $\Phi_N: N \to \kk^*$. Form the manifold $M := G\times_K N$ as the quotient of $G \times N$ by the free $K$-action given by $k\cdot(g, n) = (gk^{-1}, kn)$, for $k \in K$, $g \in G$ and $n \in N$. Consider the $G$-invariant two-form $\omega$ on $M$ defined as follows. For $n \in N$, $v, w \in T_nN$ and $X, Y  \in \p$, we have the tangent vectors $Tq(X + v)$ and $Tq(Y+w)$ to $M$ at $[e, n]$, where $q: G \times N \to M$ is the quotient map. Then $\omega$ is defined by the properties that it is $G$-invariant, and
\[
\omega_{[e, n]}\bigl(Tq(X+v), Tq(Y+w)\bigr) := \nu_n(v, w) - \langle\Phi_N(n), [X,Y]\rangle.
\]

Then we have the following decomposition of the quantisation of $(M, \omega)$.
\begin{theorem}[Quantisation commutes with reduction for semisimple groups] \label{thm [Q,R]=0}
The pair $(M, \omega)$ is an equivariantly $\Spin^c$-prequantisable presymplectic manifold, and the action of $G$ on $M$ is pre-Hamiltonian. Suppose the image of the momentum map $\Phi_N$ has nonempty intersection with the interior of the positive Weyl chamber.\footnote{If this condition is not satisfied, the is a more subtle $\rho$-shift involved in the statement of the theorem, as discussed in the introduction to \cite{Paradan}.} The $\Spin^c$-quantisation of this action decomposes as
\[
\boxed{
Q_G(M, \omega) = \sum_{\lambda \in \Ldom} Q(M_{\lambda + \rho_c}, \omega_{\lambda + \rho_c}) \clambda,
}
\]
where $(M_{\lambda + \rho_c}, \omega_{\lambda + \rho_c})$ denotes the presymplectic reduction of $(M, \omega)$ by $G$ at $(\lambda + \rho_c)/i$.
\end{theorem}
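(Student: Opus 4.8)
The plan is to reduce the statement to the compact quantisation commutes with reduction theorem of Meinrenken, Paradan and others, via a quantisation commutes with (pre-Hamiltonian) induction principle. First I would verify the geometric preliminaries: that $\omega$ as defined is a well-defined, $G$-invariant closed two-form on $M = G \times_K N$ (closedness follows because $\nu$ is closed and the $\langle \Phi_N, [X,Y]\rangle$ term is controlled by the momentum map condition $d\langle \Phi_N, X\rangle = \iota_{X_N}\nu$), and that the $\Spin^c$-structure and prequantum line bundle on $N$ induce, via the associated-bundle construction $G \times_K (\cdot)$ together with the $\Spin^c$-structure on $G/K$ coming from the isotropy representation $K \to \SO(\p)$, an equivariant $\Spin^c$-structure and prequantum data on $M$ whose curvature is $\omega$. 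I would also identify the momentum map $\Phi_M: M \to \g^*$ explicitly in terms of $\Phi_N$ and the $\p$-component, so that pre-Hamiltonicity of the $G$-action is immediate from that of the $K$-action on $N$.

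Next I would establish the key identity $Q_G(M,\omega) = \DInd_K^G\bigl(Q_K(N,\nu)\bigr)$, i.e.\ that $\Spin^c$-quantisation commutes with pre-Hamiltonian induction. At the level of operators this should follow from the fact that the Dirac operator on $M = G \times_K N$ coupled to the induced $\Spin^c$-structure decomposes as a "twisted" Dirac operator along the $G/K$-directions tensored with the Dirac operator on $N$; passing to $K$-homology and applying the analytic assembly map $\mu_M^G$, the multiplicativity of the assembly map under this induction (essentially the statement that $\mu_M^G$ intertwines induction from $K$ with Dirac induction) yields the identity. This is the technical heart of the argument and, I expect, where the bulk of the work lies — one must be careful that $N$ may be $\Spin^c$ rather than symplectic, that $M$ may be odd-dimensional (so the class lives in $K_1$), and that all the index-theoretic manipulations are valid in the non-cocompact, non-proper-in-the-naive-sense setting; the cleanest route is probably to cite or adapt the corresponding result from \cite{HochsDS} and its $\Spin^c$ refinement.

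Then I would invoke the compact $[Q,R]=0$ theorem for the $K$-action on $N$: writing $Q_K(N,\nu) = \sum_{\lambda \in \Ldom} m_\lambda [V_\lambda]$ in $R(K)$, the multiplicity $m_\lambda$ equals $Q(N_{\lambda+\rho_c})$, the $\Spin^c$-quantisation of the reduced space $N_{\lambda + \rho_c} = \Phi_N^{-1}(\lambda+\rho_c)/K_{\lambda+\rho_c}$ — this is exactly where the $\rho_c$-shift enters and where the hypothesis that $\Phi_N(N)$ meets the interior of the positive Weyl chamber is used to avoid the more delicate shift described in the footnote. Applying $\DInd_K^G$ and using $\DInd_K^G[V_\lambda] = \clambda$ then gives $Q_G(M,\omega) = \sum_\lambda Q(N_{\lambda+\rho_c}) \clambda$. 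The proof is completed by the geometric observation that presymplectic reduction commutes with the induction construction: $M_{\lambda+\rho_c} = (G \times_K N)_{\lambda+\rho_c}$ is diffeomorphic, as a $\Spin^c$-orbifold carrying the reduced form, to $N_{\lambda+\rho_c}$, because the extra $\p$-directions in $\Phi_M$ are killed when one restricts to the level set and quotients by the (compact) stabiliser. Hence $Q(M_{\lambda+\rho_c}, \omega_{\lambda+\rho_c}) = Q(N_{\lambda+\rho_c})$ and the boxed formula follows. The main obstacle is the middle step: making the quantisation-commutes-with-induction identity precise and rigorous in the $\Spin^c$, possibly-odd-dimensional setting, including the functoriality of the assembly map that underlies it.
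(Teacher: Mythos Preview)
Your proposal is correct and follows essentially the same route as the paper: first establish that pre-Hamiltonian induction is well-defined (Theorem~\ref{thm pHInd}), then prove the quantisation commutes with induction identity $Q_G(M,\omega) = \DInd_K^G\bigl(Q_K(N,\nu)\bigr)$ (Theorem~\ref{thm quant ind}, proved by adapting the argument of \cite{HochsDS} exactly as you suggest), combine this with Paradan's compact $\Spin^c$ $[Q,R]=0$ result with the $\rho_c$-shift, and finish with the identification $(M_{\lambda+\rho_c},\omega_{\lambda+\rho_c}) \cong (N_{\lambda+\rho_c},\nu_{\lambda+\rho_c})$ (Lemma~\ref{lem sympl reds}). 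Your assessment that the middle step is the technical heart, and that it should be handled by modifying the arguments of \cite{HochsDS} to the $\Spin^c$/possibly-odd-dimensional setting, matches the paper exactly.
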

We will see in Lemma \ref{lem sympl reds} that the presymplectic reductions of $(M, \omega)$ are in fact symplectic manifolds.

Since the Connes--Kasparov conjecture holds for any almost connected Lie group, Theorem \ref{thm [Q,R]=0} may be formulated for any such group. In addition, one may allow manifolds $M$ of more general forms. This leads to the following conjecture.
\begin{conjecture}[Quantisation commutes with reduction]
Theorem \ref{thm [Q,R]=0} is true for any almost connected Lie group $G$, and any equivariantly prequantisable cocompact \mbox{(pre-)} Hamiltonian $G$-manifold $(M, \omega)$ for which $\dim M = \dim(G/K)$ modulo 2.
\end{conjecture}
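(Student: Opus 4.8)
The plan is to reduce the theorem to the ``quantisation commutes with reduction'' theorem for the \emph{compact} group $K$ acting on the compact manifold $(N,\nu)$, by means of a ``quantisation commutes with induction'' principle linking $Q_G(M,\omega)$ to $Q_K(N,\nu)$ through the Dirac induction map $\DInd_K^G$. Concretely I would establish the chain
\begin{align*}
Q_G(M,\omega) &= \DInd_K^G\bigl(Q_K(N,\nu)\bigr) \\
&= \DInd_K^G\Bigl(\sum_{\lambda\in\Ldom} Q(N_{\lambda+\rho_c})\,[V_\lambda]\Bigr) = \sum_{\lambda\in\Ldom} Q(N_{\lambda+\rho_c})\,\clambda ,
\end{align*}
and finish by invoking Lemma \ref{lem sympl reds} to identify the $K$-reduction $N_{\lambda+\rho_c}$ with the $G$-reduction $M_{\lambda+\rho_c}$. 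The second equality is Paradan's $\Spin^c$-version of the compact $[Q,R]=0$ theorem \cite{Paradan} applied to $(N,\nu)$, and the hypothesis that $\Phi_N(N)$ meets the interior of the positive Weyl chamber is precisely what lets one apply it without the extra $\rho$-shift of the footnote; the third equality is additivity of $\DInd_K^G$ together with $\clambda=\DInd_K^G[V_\lambda]$. Only finitely many terms occur because $N$ is compact.

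First I would dispatch the geometric assertions. Closedness of $\omega$ is the standard computation for the minimal-coupling (Weinstein) form on an associated bundle: writing $\omega$ on $M=G\times_K N$, via a local section of $G\to G/K$, as the sum of the form pulled back from $N$ and a term assembled from $\Phi_N$ and the Maurer--Cartan form, one obtains $d\omega=0$ from $d\nu=0$, the momentum map equation on $N$, and the Maurer--Cartan equation. For the pre-Hamiltonian structure I would verify directly that
\[
\Phi_M([g,n]) = \Ad^*(g)\bigl(\Phi_N(n)\bigr),\qquad \kk^*\hookrightarrow\g^*,
\]
is a $G$-equivariant momentum map for $(M,\omega)$, using the description of the generating vector fields of the $G$-action at points $[e,n]$ together with the momentum map property of $\Phi_N$. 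Equivariant $\Spin^c$-prequantisability of $(M,\omega)$ is then obtained by \emph{inducing} the data on $N$: the $\Spin^c$-structure on $M$ is the product of the canonical $\Spin^c$-structure on the $G/K$-directions $G\times_K\p$ and the structure induced from $N$, with prequantum line bundle induced from that on $N$, and one checks that the curvature of the induced connection is the required multiple of $\omega$. This is pre-Hamiltonian induction of $\Spin^c$-structures, as in \cite{HochsDS} but now without nondegeneracy of $\omega$.

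The core step, and the main obstacle, is the induction identity $Q_G(M,\omega)=\DInd_K^G(Q_K(N,\nu))$. By definition $Q_G(M,\omega)=\mu_M^G[\D_M^L]$, while $Q_K(N,\nu)=\Gind(\D_N^{L_N})\in R(K)=K_0(C^*_r(K))$ equals $\mu_N^K[\D_N^{L_N}]$ since $K$ is compact. I would show that the square
\[
\xymatrix{
K_*^K(N) \ar[r]^-{\mu_N^K} \ar[d]_-{\cong} & R(K) \ar[d]^-{\DInd_K^G} \\
K_*^G(M) \ar[r]^-{\mu_M^G} & K_*(C^*_r(G))
}
\]
commutes, where the left vertical arrow is the induction isomorphism in equivariant $K$-homology attached to $M=G\times_K N$, and that this isomorphism carries $[\D_N^{L_N}]$ to $[\D_M^L]$. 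The latter is the geometric heart: along the fibration $M\to G/K$ the $\Spin^c$-Dirac operator on $M$ for the induced structure decomposes into the family of fibrewise $\Spin^c$-Dirac operators on the $N$-fibres and the Dirac operator along $G/K$, which is what implements $\DInd_K^G$; matching the $\Spin^c$-structures and the line bundles involved carefully is where a spurious $\rho_c$-twist could enter, and checking that it does not — so that the only $\rho$-shift is the one produced by Paradan's theorem on $N$ — is the delicate point. Commutativity of the square then follows from naturality of the analytic assembly map under induction from $K$ to $G$; for $\Phi_N$ valued in $\kse$ this is essentially the content of \cite{HochsDS}, and the argument adapts here, since it involves only $\Spin^c$-structures and line bundles, not nondegeneracy of the two-forms.

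Assembling the pieces yields the boxed formula, with $Q(M_{\lambda+\rho_c},\omega_{\lambda+\rho_c})=Q(N_{\lambda+\rho_c})$ by Lemma \ref{lem sympl reds}. Beyond the Dirac-operator identification just discussed, the remaining work is bookkeeping: checking that all objects are well defined (cocompactness of the $G$-action on $M$, and $\dim M\equiv d\bmod 2$ so that the class lies in $K_d(C^*_r(G))$), that the sum over $\Ldom$ is finite, and that the conventions for $\Ldom$, $\rho_c$ and the factor $1/i$ are consistent throughout.
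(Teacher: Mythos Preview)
The statement you are addressing is a \emph{conjecture}, not a theorem; the paper does not prove it. Your proposal is, in effect, a proof of Theorem~\ref{thm [Q,R]=0} --- and a correct one, following essentially the paper's own argument (quantisation commutes with induction, then Paradan's compact $[Q,R]=0$, then Lemma~\ref{lem sympl reds}). But the conjecture generalises Theorem~\ref{thm [Q,R]=0} in two directions: to arbitrary almost connected $G$, and, crucially, to arbitrary cocompact (pre-)Hamiltonian $G$-manifolds $(M,\omega)$, \emph{not} just those of the form $M=G\times_K N$.

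Your argument tacitly assumes $M=G\times_K N$ from the first line onward: the minimal-coupling description of $\omega$, the formula $\Phi_M([g,n])=\Ad^*(g)\Phi_N(n)$, the induction of $\Spin^c$-data from $N$, the commutative square relating $\mu_M^G$ and $\mu_N^K$, and the appeal to Lemma~\ref{lem sympl reds} all require a compact Hamiltonian $K$-manifold $N$ to reduce to. For a general cocompact $(M,\omega)$ there is no such $N$. The paper is explicit about why: the Hamiltonian cross-section construction that would extract $N$ from $M$ breaks down once the momentum image leaves the strongly elliptic set (the putative slice $N\subset M$ need not even be a smooth submanifold), which is precisely why the induced form $M=G\times_K N$ had to be \emph{assumed} in Theorem~\ref{thm [Q,R]=0}. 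So the genuine obstacle to the conjecture --- producing any link between $Q_G(M,\omega)$ and a compact $[Q,R]=0$ problem when $M$ is not a priori induced --- is untouched by your proposal.
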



\subsection{Special cases} \label{sec special cases}

Suppose $G$ is complex semisimple. Let $\rho$ be half the sum of a choice of positive roots of $\g$ with respect to the standard Cartan subalgebra, compatible with the choice of positive compact roots made earlier. We will see
 in Subsection \ref{sec Ktheor CG} that then
 \[
 \clambda = \pilamrho,
 \]
where $\pilamrho \in K_d(C^*_r(G))$ is the class associated to the connected component of the principal series of $G$ corresponding to the parameter $\lambda + \rho$. Hence Theorem \ref{thm [Q,R]=0} reduces to the following statement.
\begin{corollary}[Quantisation commutes with reduction at families of principal series representations] \label{cor [Q,R]=0 ps}
In the setting of Theorem \ref{thm [Q,R]=0}, with $G$ complex semisimple, one has
\[
\boxed{
Q_G(M, \omega) = \sum_{\lambda \in \Ldom} Q(M_{\lambda + \rho_c}, \omega_{\lambda + \rho_c}) \pilamrho.
}
\]
\end{corollary}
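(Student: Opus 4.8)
The plan is to deduce this corollary from Theorem \ref{thm [Q,R]=0} by simply substituting the identification of $K$-theory generators that will be established in Subsection \ref{sec Ktheor CG}. Concretely, the strategy has two ingredients: first, Theorem \ref{thm [Q,R]=0} itself, which gives the decomposition
\[
Q_G(M, \omega) = \sum_{\lambda \in \Ldom} Q(M_{\lambda + \rho_c}, \omega_{\lambda + \rho_c}) \clambda
\]
in $K_d(C^*_r(G))$ for the manifold $M = G \times_K N$ with its induced presymplectic form; second, the identity $\clambda = \pilamrho$, valid for $G$ complex semisimple, where $\rho$ is the $\rho$-shift for $\g$ compatible with the chosen $\rho_c$. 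Granting both, the corollary follows by termwise replacement in the (finite, or locally finite) sum.

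The first step I would carry out is therefore to verify that all hypotheses of Theorem \ref{thm [Q,R]=0} are inherited in the complex semisimple case: $G$ complex semisimple is in particular almost connected and semisimple, so $K$ is a maximal compact subgroup, the Connes--Kasparov isomorphism $\DInd_K^G\colon R(K) \to K_d(C^*_r(G))$ holds, and the hypothesis that $\Phi_N$ meets the interior of the positive Weyl chamber is carried over verbatim. Thus $(M, \omega)$ is equivariantly $\Spin^c$-prequantisable and pre-Hamiltonian, and the boxed formula of Theorem \ref{thm [Q,R]=0} applies with no change. The second step is to invoke the computation of Subsection \ref{sec Ktheor CG}: for $G$ complex semisimple, the principal series parametrised by $\lambda + \rho$ has, under Dirac induction and the Connes--Kasparov description of $K_d(C^*_r(G))$, associated $K$-theory class exactly $\DInd_K^G[V_\lambda] = \clambda$. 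Substituting $\clambda = \pilamrho$ into the boxed formula yields the claimed expression.

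The only genuine mathematical content beyond bookkeeping is the identity $\clambda = \pilamrho$, and I would regard this as the main obstacle — but it is precisely what Subsection \ref{sec Ktheor CG} is devoted to, so here it may be cited. Its proof rests on the explicit structure theory of complex semisimple groups: the principal series representations are (for generic, in particular regular dominant, parameters) irreducible, and the Connes--Kasparov/Baum--Connes description of $C^*_r(G)$ via the Plancherel decomposition realises $K_d(C^*_r(G))$ as generated by the classes attached to these principal series components. Matching the Dirac induction of $V_\lambda$ with the principal series parameter $\lambda + \rho$ is the standard $\rho$-shift appearing in the orbit method for complex groups. The remaining care needed in the present statement is purely formal: one must ensure the sum is interpreted in the same completed or locally finite sense in both Theorem \ref{thm [Q,R]=0} and the corollary, and that the $\rho_c$-shift in the reduction parameter $(\lambda + \rho_c)/i$ is left untouched — only the coefficient classes $\clambda$ are being renamed, not the geometric reductions $(M_{\lambda+\rho_c}, \omega_{\lambda+\rho_c})$, which remain exactly as in Theorem \ref{thm [Q,R]=0} and, by Lemma \ref{lem sympl reds}, are genuine symplectic manifolds.
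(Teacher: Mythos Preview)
Your proposal is correct and follows essentially the same approach as the paper: apply Theorem \ref{thm [Q,R]=0} and then substitute the identification $\clambda = \pilamrho$ from Subsection \ref{sec Ktheor CG} (equation \eqref{eq lambda pilamrho}), which is precisely how the paper deduces the corollary.
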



Next, suppose $G$ is a semisimple Lie group with discrete series. Then Theorem \ref{thm [Q,R]=0} implies a generalised version of the main result in \cite{HochsDS}. Indeed, let $\pi^d_{\mu}$ be an irreducible discrete series representation of $G$, where  $\mu$ is the Harish--Chandra parameter of $\pi^d_{\mu}$ such that $(\alpha, \mu) \geq 0$ for all positive compact roots $\alpha$ (for some Weyl group invariant inner product). 
Let $\pidmu \in K_0(C^*_r(G))$ be the associated generator, as in \cite{HochsDS, Lafforgue}.
\begin{corollary}[Quantisation commutes with reduction at discrete series representations] \label{cor [Q,R]=0 ds}
The multiplicity of $\pidmu$ in $Q_G(M, \omega)$ equals $(-1)^{d/2} Q(M_{\mu})$.
\end{corollary}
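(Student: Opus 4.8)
The plan is to derive Corollary~\ref{cor [Q,R]=0 ds} as a specialisation of Theorem~\ref{thm [Q,R]=0} by matching the generators $\clambda$ of $K_d(C^*_r(G)) = K_0(C^*_r(G))$ appearing in the boxed formula with the discrete series generators $\pidmu$. First I would recall the relation between Dirac induction and the discrete series: for a group $G$ with discrete series, the Connes--Kasparov isomorphism $\DInd_K^G\colon R(K) \to K_0(C^*_r(G))$ sends, up to the Vogan $\rho$-shift, the irreducible $K$-type $V_\lambda$ to (a class that pairs with) the discrete series representation whose Harish--Chandra parameter is $\lambda + \rho_c - \rho$, or equivalently the discrete series representation with lowest $K$-type $\lambda + \rho_c - \rho + (\text{something})$ — the precise bookkeeping here is exactly the content of the identification of generators in \cite{HochsDS, Lafforgue}. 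The key point is that under this correspondence, $\pidmu = \DInd_K^G[V_\lambda] = \clambda$ for the unique $\lambda \in \Ldom$ with $\lambda + \rho_c$ corresponding to the Harish--Chandra parameter $\mu$ in the normalisation fixed in the statement (i.e.\ $\mu = \lambda + \rho_c$ as elements of $i\h^*$ after the relevant identifications, possibly with a sign twist $(-1)^{d/2}$ coming from the orientation/metaplectic correction in the definition of Dirac induction for the discrete series).

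Once this dictionary is in place, the proof is short: one extracts the coefficient of $\pidmu$ from the boxed decomposition
\[
Q_G(M, \omega) = \sum_{\lambda \in \Ldom} Q(M_{\lambda + \rho_c}, \omega_{\lambda + \rho_c}) \clambda
\]
in $K_0(C^*_r(G))$. Since the $\clambda$ are a free $\Z$-basis (by the Connes--Kasparov theorem), the multiplicity of $\pidmu$ is precisely $Q(M_{\mu})$ up to the sign $(-1)^{d/2}$ introduced by the normalisation of the generators; here $M_\mu := M_{\mu}$ is the presymplectic reduction of $(M,\omega)$ at $\mu/i = (\lambda + \rho_c)/i$, which by Lemma~\ref{lem sympl reds} is an honest symplectic manifold, so $Q(M_\mu)$ is its ordinary $\Spin^c$-quantisation. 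I would also check that the hypothesis that $\Phi_N$ meets the interior of the positive Weyl chamber is exactly what guarantees that the relevant $\lambda + \rho_c$ is regular dominant, so that $\mu$ is a genuine Harish--Chandra parameter of a discrete series representation and the reduction $M_\mu$ is nonempty in the expected range.

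The main obstacle will be pinning down the sign $(-1)^{d/2}$ and the precise $\rho$-shift so that the normalisation in Corollary~\ref{cor [Q,R]=0 ds} (namely $(\alpha, \mu) \geq 0$ for all positive compact roots $\alpha$, and the generator $\pidmu$ as in \cite{HochsDS, Lafforgue}) matches the normalisation $\lambda \in \Ldom$, $\rho_c$ half-sum of positive compact roots, used in Theorem~\ref{thm [Q,R]=0}. Concretely, one must verify that $\DInd_K^G[V_\lambda]$ and $(-1)^{d/2}\pidmu$ agree as elements of $K_0(C^*_r(G))$ under the substitution $\mu = \lambda + \rho_c$; this is essentially a consequence of the Atiyah--Schmid realisation of the discrete series as indices of Dirac operators on $G/K$ together with the compatibility of Dirac induction with the Dirac operator on $G/K$ twisted by $V_\lambda$, but the orientation conventions need to be tracked carefully. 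Everything else is formal manipulation with the $K$-theory basis and an appeal to Theorem~\ref{thm [Q,R]=0} and Lemma~\ref{lem sympl reds}.
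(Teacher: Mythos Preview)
Your proposal is correct and follows essentially the same route as the paper: the paper derives the corollary from Theorem~\ref{thm [Q,R]=0} by invoking the identification $\pidmu = (-1)^{d/2}\,c(\mu - \rho_c)$ (equation~\eqref{eq class ds}), which it obtains from the remark below Lemma~2.2.1 in \cite{Lafforgue} together with Lemma~1.5 in \cite{HochsDS}, and then reads off the coefficient of $\clambda$ with $\lambda = \mu - \rho_c$ from the boxed decomposition. Your only task would be to replace the hedged bookkeeping (``$\rho_c - \rho$'', ``$+(\text{something})$'') with the precise citation of those two results, which pin down both the shift $\lambda = \mu - \rho_c$ and the sign $(-1)^{d/2}$ exactly as you anticipated.
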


\begin{remark}
Let us show that Corollary  \ref{cor [Q,R]=0 ds} implies
Theorem 1.9 in \cite{HochsDS}, under a slightly stronger condition. Let $\mathfrak{t}^*_+$ be the positive Weyl chamber for a maximal torus $\mathfrak{t} \subset \kk$, corresponding to the choice of positive roots. Let $\mathfrak{t}^*_{+, \se}$ be the chamber $\mathfrak{t}^*_+$, with the Weyl chamber walls associated to noncompact roots removed. In \cite{HochsDS}, it was assumed that
\[
\Phi(M) \subset \gse = \Ad^*(G) \mathfrak{t}^*_{+, \se},
\]
with $\Phi$ the momentum map of the action in question. By Theorem 4.2 in \cite{HochsDS}, this implies that $M$ is of the form 
$M = G\times_K N$ as above.

In the setting of Corollary \ref{cor [Q,R]=0 ds}, let
\[
\Rdmu: K_0(C^*_r(G)) \to \Z
\]
be the reduction map used in \cite{HochsDS}, i.e.\ the map induced by the homomorphism  $\pi^d_{\mu}$ from $C^*_r(G)$
to the compact operators on the representation space of $\pi^d_{\mu}$. Theorem 1.9 in \cite{HochsDS} states that
\begin{equation} \label{eq quant red ds 2}
\Rdmu\bigl(Q_G(M, \omega)\bigr) = (-1)^{d/2} Q(M_{\mu}).
\end{equation}

Now let $\mathfrak{t}^*_{++}$ be the interior of $ \mathfrak{t}^*_{+}$, and suppose that
\[
\Phi(M) \subset \Ad^*(G) \bigl( \mathfrak{t}^*_{++}).
\]
Since $\Lambda^*_+ \cap \mathfrak{t}^*_{++} = \Lambda^*_+ \cap i\gse + \rho_c$,  the terms in the sum in Theorem \ref{thm [Q,R]=0} are only nonzero if $\lambda \in i\gse$. Such a $\lambda$
is of the form $\mu - \rho_c$, with $\mu$ regular. Then $\mu$ is the Harish--Chandra parameter of an irreducible discrete series representation $\pi^d_{\mu}$ (see e.g.\ Theorem 9.20 in \cite{Knapp}), and we will see in \eqref{eq class ds}, that $\clambda = (-1)^{d/2}\pidmu$. (In particular, $\pidmu$ is indeed a generator of $K_0(C^*_r(G))$.) Hence $Q_G(M, \omega)$ decomposes in terms of such generators $\pidmu$.
Since $\Rdmu$ maps the discrete series generator $[\pi^d_{\mu}]$ to one, and all others to zero (see \cite{Lafforgue}, bottom of page 807), Corollary \ref{cor [Q,R]=0 ds} indeed  implies the relation \eqref{eq quant red ds 2}.
\end{remark}


\subsection{Orbit methods} \label{sec orbit results}

As a special case\footnote{This special case is not quite a corollary to Theorem \ref{thm [Q,R]=0}, but obtained in an analogous way. See Subsection \ref{sec orbit}.} of Theorem \ref{thm [Q,R]=0}, we obtain an `orbit method' for the generators $\clambda$ of $K_d(C^*_r(G))$: a realisation of  these classes as quantisations of fibre bundles over coadjoint orbits. If $G$ is complex semisimple, a class $\clambda = \pilamrho$ corresponds, in a sense, to a \emph{bundle} of irreducible representations, so that it seems reasonable to obtain it as the quantisation of a bundle over a coadjoint orbit. If $G$ is a real semisimple Lie groups with discrete series, we realise the generator of $K_0(C^*_r(G))$ associated to a discrete series representation as the quantisation of a coadjoint orbit.

Explicitly, let $\lambda \in \Ldom$ be given, and let $\xi:= (\lambda+\rho_c)/i$. Consider the homogeneous space
$
M^{\lambda} := G/K_{\xi}, 
$
and the coadjoint orbit
$
\O^{\lambda} := G\cdot \xi 
$
of $G$ through $\xi$. Let $\omega^{\lambda}$ be the standard Kostant--Kirillov symplectic form on $\O^{\lambda}$. Let \begin{equation} \label{eq pi}
p: M^{\lambda} \to G/G_{\xi} \cong \O^{\lambda} 
\end{equation}
be the natural projection map, and consider the two-form
$p^*\omega^{\lambda}$ on $M^{\lambda}$. Note that this form is closed since $\omega^{\lambda}$ is, so that $(M^{\lambda}, p^*\omega^{\lambda})$ is a presymplectic manifold.
\begin{proposition}[Orbit method for generators of $K$-theory of group $C^*$-algebras] \label{prop orbit}
The action of $G$ on the presymplectic manifold $(M^{\lambda}, p^*\omega^{\lambda})$  is equivariantly $\Spin^c$-prequantisable. The quantisation of this action is the class
\[
\boxed{
Q_G\bigl(M^{\lambda}, p^*\omega^{\lambda}\bigr) = \clambda \quad \in K_d(C^*_r(G)).
}
\]
\end{proposition}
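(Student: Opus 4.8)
The plan is to recognise $(M^\lambda, p^*\omega^\lambda)$ as the "empty fibre" case $N = \{pt\}$-twisted version of the construction in Theorem \ref{thm [Q,R]=0}, and to compute its quantisation directly via the analytic assembly map rather than by invoking the full $[Q,R]=0$ statement. Concretely, take $\xi = (\lambda+\rho_c)/i \in \kk^*$, so that $K_\xi$ is the centraliser of $\xi$ in $K$; since $\lambda + \rho_c$ is dominant and regular for the compact roots in the direction we need, $K_\xi$ is a (connected) subgroup with $\kk_\xi$ containing $\mathfrak t$, and $K/K_\xi \cong K\cdot\xi$ is a compact coadjoint orbit of $K$ carrying the line bundle $L_\xi \to K/K_\xi$ with $c_1 = [\omega_\xi^K]$. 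The first step is to verify the $\Spin^c$-prequantisation claim: the two-form $p^*\omega^\lambda$ on $M^\lambda = G/K_\xi$ pulls back from the coadjoint orbit $\O^\lambda = G/G_\xi$, and a $\Spin^c$-structure with the required curvature is obtained by pulling back the canonical $\Spin^c$-structure on $\O^\lambda$ along $p$ and tensoring with the $K$-equivariant prequantum data along the fibre $G_\xi/K_\xi$; one should check the $\rho$-shift bookkeeping works out, which is exactly where the shift by $\rho_c$ in $\xi$ enters.

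Next I would identify the Dirac operator. Because $M^\lambda = G\times_K (K/K_\xi)$, the Dirac operator $\D_{M^\lambda}^{L}$ is $G$-equivariantly the operator induced (in the sense of the Hamiltonian/$\Spin^c$ induction of Subsection \ref{sec orbit}, cf.\ the footnote) from the $K$-equivariant $\Spin^c$-Dirac operator on the compact manifold $K/K_\xi$ coupled to $L_\xi$. By Bott periodicity / the Borel--Weil realisation on the compact homogeneous space, $\Gind_K(\D_{K/K_\xi}^{L_\xi}) = [V_\lambda] \in R(K)$ — this is the standard computation that the $\Spin^c$-quantisation of the coadjoint orbit $K\cdot\xi$ with $\xi = (\lambda+\rho_c)/i$ is the irreducible $K$-representation of highest weight $\lambda$ (here the $\rho_c$ in $\xi$ is precisely the metaplectic correction that removes the usual $\rho$-shift).

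The third and decisive step is the compatibility of the analytic assembly map with this induction: one must show
\[
Q_G(M^\lambda, p^*\omega^\lambda) \;=\; \mu_{M^\lambda}^G\bigl[\D_{M^\lambda}^{L}\bigr] \;=\; \DInd_K^G\bigl(\Gind_K(\D_{K/K_\xi}^{L_\xi})\bigr).
\]
This is a "quantisation commutes with induction" identity for the assembly map applied to a manifold of the form $G\times_K N$ with $N$ compact: the assembly map of the induced Dirac operator on $G\times_K N$ equals Dirac induction $\DInd_K^G$ applied to the $K$-equivariant index of the Dirac operator on $N$. Such a naturality statement is the technical heart of \cite{HochsDS} and should be quoted or re-derived here; combined with the previous step it gives $\DInd_K^G[V_\lambda] = \clambda$, as desired.

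The main obstacle is this last step: making the induction-compatibility of $\mu^G_{M}$ precise in the \emph{presymplectic}, possibly odd-dimensional setting, and tracking the degree shift so that the class lands in $K_d(C^*_r(G))$ with the correct sign. In particular one must be careful that the fibre $K/K_\xi$ may be odd-dimensional (so that the relevant $K$-index lives in $R(K)$ only after the $\Spin^c$ degree conventions are fixed), that the $\Spin^c$-structure on $M^\lambda$ is the one induced from the pullback structure on $\O^\lambda$ and not some other, and that the hypothesis "$\Phi_N(N)$ meets the interior of the positive chamber" degenerates correctly to the statement that $\xi$ is regular — this is why the statement is an analogue of, rather than a literal corollary to, Theorem \ref{thm [Q,R]=0}.
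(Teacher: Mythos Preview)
Your approach is essentially the paper's: identify $M^\lambda \cong G\times_K(K\cdot\xi)$ as a pre-Hamiltonian induced space (this is the content of Lemma~\ref{lem M lambda}, which also checks that the induced two-form matches $p^*\omega^\lambda$), compute $Q_K(K\cdot\xi)=[V_\lambda]$ via Borel--Weil, and then apply the quantisation-commutes-with-induction result (Theorem~\ref{thm quant ind}) to obtain $Q_G(M^\lambda)=\DInd_K^G[V_\lambda]=\clambda$. Your worry about $K/K_\xi$ possibly being odd-dimensional is unnecessary---it is a coadjoint orbit of the compact group $K$, hence symplectic and even-dimensional---so the degree and sign bookkeeping you flag as an obstacle is in fact automatic.
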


If $G$ is complex semisimple, the fact that $\clambda = \pilamrho$ means that Proposition \ref{prop orbit} takes the following form.
\begin{corollary} [Orbit method for families of principal series representations] \label{cor orbit ps}
If $G$ is complex semisimple, then
\[
\boxed{
Q_G\bigl(M^{\lambda}, p^*\omega^{\lambda}\bigr) = \pilamrho \quad \in K_d(C^*_r(G)).
}
\]
\end{corollary}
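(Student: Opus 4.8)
The plan is to read this off from Proposition~\ref{prop orbit} together with the identification of the Connes--Kasparov generators for complex semisimple $G$. Proposition~\ref{prop orbit} gives $Q_G(M^{\lambda}, p^*\omega^{\lambda}) = \clambda$, where $\clambda = \DInd_K^G[V_{\lambda}]$, so the corollary amounts to the single equality $\clambda = \pilamrho$. That equality is precisely what is proved in Subsection~\ref{sec Ktheor CG}, and once it is available the corollary is immediate. So the real content to explain is why $\DInd_K^G[V_{\lambda}] = \pilamrho$ when $G$ is complex semisimple.

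To do this I would use the Plancherel decomposition of $C^*_r(G)$ for $G$ complex semisimple, where the tempered dual consists entirely of unitary principal series representations induced from a Cartan subgroup $TA$ with $T \subset K$ a maximal torus. This exhibits $C^*_r(G)$, up to Morita equivalence, as a direct sum of algebras of the form $C_0(\aaa^*/W_{\lambda}) \otimes \K$ indexed by the discrete parameters $\lambda + \rho$ (a weight of $T$), each summand having $K$-theory $\Z$ generated by $\pilamrho$. I would then trace the Dirac induction map $\DInd_K^G$ through this decomposition: by the compact picture of induced representations and Frobenius reciprocity, $V_{\lambda}$ sits inside $\Res^G_K \pi^p_{\lambda + \rho}$ with the multiplicity dictated by the $\rho_c$-twist in the spinor bundle on $G/K$, and comparing the resulting index class with the generator of the corresponding $K$-theory summand yields $\DInd_K^G[V_{\lambda}] = \pilamrho$, consistently with the normalisation of $\clambda$ fixed in Theorem~\ref{thm [Q,R]=0}. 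Combining this with Proposition~\ref{prop orbit} then gives $Q_G(M^{\lambda}, p^*\omega^{\lambda}) = \clambda = \pilamrho$.

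The main obstacle is not conceptual but is the careful bookkeeping of the various $\rho$-shifts: one must match the highest weight $\lambda$ of $V_{\lambda}$, the spinor twist $\rho_c$ on $G/K$, and the infinitesimal character $\lambda + \rho$ of $\pi^p_{\lambda + \rho}$, and pin down the orientation and sign conventions, so that the answer comes out as $\pilamrho$ exactly rather than up to a sign or a Weyl-group translate of the parameter. This is exactly the computation performed in Subsection~\ref{sec Ktheor CG}, so no input beyond Proposition~\ref{prop orbit} and that subsection is required.
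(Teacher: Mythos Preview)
Your proposal is correct and follows exactly the paper's route: invoke Proposition~\ref{prop orbit} to get $Q_G(M^{\lambda}, p^*\omega^{\lambda}) = \clambda$, then use the identification $\clambda = \pilamrho$ from Subsection~\ref{sec Ktheor CG}. One small clarification: Subsection~\ref{sec Ktheor CG} does not itself carry out the computation you sketch (Plancherel decomposition, Frobenius reciprocity, $\rho$-shift bookkeeping); it simply quotes the equality \eqref{eq lambda pilamrho} from Section~5 of Penington--Plymen \cite{PP}, so your middle two paragraphs, while a reasonable outline of that external argument, are not required input for the corollary.
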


Note that the map $p$ defines a fibre bundle, with contractible fibre $G_{\xi}/K_{\xi}$. If $\lambda$ is a \emph{regular} weight, then we have $G_{\xi} = MA$ and $K_{\xi} = M$ (for a Borel subgroup\footnote{In this sentence only, $M$ denotes a subgroup of $K$, rather than a manifold to be quantised.} $MAN < G$), so that  this fibre equals the subgroup $A$. Then this fibre is homeomorphic to $\aaa$, which in turn is homeomorphic to the connected component of the principal series of $G$ associated to the class $\pilamrho$. 

In the case considered in \cite{HochsDS}, one has $G_{\xi} = K_{\xi}$, so that $M^{\lambda}$ equals the coadjoint orbit $\O^{\lambda}$. Then Proposition \ref{prop orbit} has the following consequence.
\begin{corollary}[Orbit method for the discrete series] \label{cor orbit ds}
Let $G$ be a semisimple Lie group with discrete series. In the notation of Corollary \ref{cor [Q,R]=0 ds}, one has
\[
\boxed{Q_G(\O^{\mu - \rho_c}, \omega^{\mu - \rho_c}) = (-1)^{d/2} \pidmu \quad \in K_0(C^*_r(G)).}
\]
\end{corollary}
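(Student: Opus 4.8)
The plan is to derive this as a special case of Proposition \ref{prop orbit}, which we are allowed to assume. First I would apply Proposition \ref{prop orbit} with $\lambda := \mu - \rho_c$: since $\mu$ is the Harish--Chandra parameter of a discrete series representation and thus regular and dominant, $\lambda = \mu - \rho_c$ lies in $\Ldom$, so the proposition applies. It immediately gives $Q_G(M^{\lambda}, p^*\omega^{\lambda}) = c(\lambda) \in K_d(C^*_r(G))$, where $\xi = (\lambda + \rho_c)/i = \mu/i$ and $M^{\lambda} = G/K_{\xi}$. The two things left to check are (i) that $M^{\lambda}$ is in fact the coadjoint orbit $\O^{\mu - \rho_c}$ with its Kostant--Kirillov form, so that $p$ is the identity and $p^*\omega^{\lambda} = \omega^{\mu - \rho_c}$; and (ii) that the class $c(\mu - \rho_c)$ equals $(-1)^{d/2}[\pi^d_{\mu}]$ in $K_0(C^*_r(G))$, which is exactly the identification labelled \eqref{eq class ds} referenced in the remark after Corollary \ref{cor [Q,R]=0 ds}. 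Note also that $d$ is even here, since a group with discrete series has equal rank, $\rank G = \rank K$, forcing $\dim G/K$ to be even; hence $K_d = K_0$ and the sign $(-1)^{d/2}$ is well defined.

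For step (i): because $\mu$ is regular, $\xi = \mu/i \in \mathfrak{t}^*$ is a regular element, and in the equal-rank case its centraliser $G_{\xi}$ under the coadjoint action is already compact — indeed $G_{\xi} = K_{\xi}$, the maximal torus (or more precisely the centraliser of $\xi$ in $K$). This is precisely the situation isolated in the sentence ``In the case considered in \cite{HochsDS}, one has $G_{\xi} = K_{\xi}$''. Consequently the fibre $G_{\xi}/K_{\xi}$ of $p$ is a point, $M^{\lambda} = G/K_{\xi} = G/G_{\xi} \cong G\cdot\xi = \O^{\lambda}$, and the projection $p$ is a diffeomorphism intertwining $p^*\omega^{\lambda}$ with $\omega^{\lambda}$, the Kostant--Kirillov form (this is how $\omega^{\lambda}$ was defined on $\O^{\lambda}$ in the first place). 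So $Q_G(M^{\lambda}, p^*\omega^{\lambda}) = Q_G(\O^{\mu - \rho_c}, \omega^{\mu - \rho_c})$.

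For step (ii): I would invoke the identification of $c(\lambda) = \DInd_K^G[V_{\lambda}]$ with discrete series classes. When $\lambda = \mu - \rho_c$ with $\mu$ regular, Dirac induction applied to $V_{\mu - \rho_c}$ produces, up to the sign $(-1)^{d/2}$ coming from the grading/orientation conventions on the spinor bundle of $G/K$, the $K$-theory class $[\pi^d_{\mu}]$ of the discrete series representation with Harish--Chandra parameter $\mu$; this is the content of \eqref{eq class ds} and is essentially the $K$-theoretic form of the Atiyah--Schmid realisation of the discrete series (cf.\ \cite{Lafforgue}). Combining with step (i) yields $Q_G(\O^{\mu - \rho_c}, \omega^{\mu - \rho_c}) = (-1)^{d/2}[\pi^d_{\mu}]$, as claimed.

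The main obstacle is really step (ii): making the sign $(-1)^{d/2}$ and the precise normalisation of the discrete series class consistent with the $\rho_c$-shift and the $\Spin^c$ (rather than $\Spin$) conventions used for $M^{\lambda}$. Everything else — the reduction of Proposition \ref{prop orbit} to the equal-rank case, and the collapse of the fibration $p$ — is formal once one knows $G_{\xi} = K_{\xi}$ for regular $\xi$ in the equal-rank situation. I would handle the sign bookkeeping once, carefully, when proving \eqref{eq class ds}, and then simply cite it here.
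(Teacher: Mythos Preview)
Your proposal is correct and follows essentially the same route as the paper: apply Proposition~\ref{prop orbit} with $\lambda = \mu - \rho_c$, observe that $G_{\xi} = K_{\xi}$ so that $M^{\lambda} = \O^{\mu-\rho_c}$ and $p$ is the identity, and then invoke \eqref{eq class ds} to convert $c(\mu-\rho_c)$ into $(-1)^{d/2}[\pi^d_{\mu}]$. Your added remarks (that $\mu-\rho_c\in\Ldom$, that $d$ is even because $\rank G = \rank K$, and the reason $G_{\xi}$ is compact) are correct elaborations of points the paper leaves implicit.
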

(Note that $\O^{\mu - \rho_c}$ is the coadjoint orbit through $\mu/i$.)


\subsection*{Generalisations}

It would be useful and interesting to generalise the results in this paper to cases where the orbit spaces are not necessarily compact (see \cite{MaZ, MaZ2, Paradan3, Paradan4} for such results for compact groups). Recent results in this direction are to appear in \cite{HM}.

\section{Dirac induction and reduction}

The generators $\clambda$ of $K_d(C^*_r(G))$ are defined via \emph{Dirac induction}, which we discuss in Subsection \ref{sec dirac ind}. In Subsection \ref{sec Ktheor CG}, we briefly discuss Penington and Plymen's explicit description of Dirac induction for complex semisimple Lie groups, in terms of  the principal series of such groups.

\subsection{Dirac induction} \label{sec dirac ind}

Dirac induction is a map
\[
\DInd_K^G: R(K) \to K_*(C^*_r(G)),
\]
which is defined in terms of Dirac operators $\D^V$ on $G/K$, coupled to given irreducible representations $V$ of $K$. For this map to be well-defined, we will assume that the representation $\Ad: K \to \SO(\p)$  lifts to $\widetilde{\Ad}: K \to \Spin(\p)$. It may be necessary\footnote{Although, in the complex semisimple case,  Penington and Plymen describe how to handle the case where a lift $\widetilde{\Ad}$ does not exist in Section 6 of \cite{PP}.} to replace $G$ and $K$ by double covers for this lift to exist. 

Let $\Delta_{\p}$ be the standard representation of $\Spin(\p)$. Dirac induction is defined by
\begin{equation}\label{IGK}
\DInd_K^G[V] := \left[ \bigl(C^*_r(G) \otimes \Delta_{\p} \otimes V \bigr)^K, b \bigl({\D}^V \bigr) \right] \quad
\in \KK_{*}(\C, C^*_r(G)) \cong K_{*}(C^*_r(G)),
\end{equation}
where, for an orthonormal basis $\{X_j\}$ of $\p$,
\begin{equation} \label{DV}
{\D}^V := \sum_{j=1}^{d} X_j \otimes c(X_j) \otimes 1_V
\end{equation}
is the $\Spin$-Dirac operator on $G/K$, and $b: \R \to \R$ is a normalising function, e.g.\ $b(x) = \frac{x}{\sqrt{1+x^2}}$. Here in the first factor, $X_j$ is viewed as a vector field on $G$, and in the second factor, $c$ denotes the Clifford action.

If $\p$ is even-dimensional, then $\Delta_{\p}$ splits into two irreducibles: $\Delta_{\p} = \Delta_{\p}^+ \oplus \Delta_{\p}^-$, and the Dirac operator \eqref{DV} is odd with respect to the induced grading on $\E_V$. Therefore, Dirac induction takes values in $\KK_{d}(\C, C^*_r(G))$.

The Connes--Kasparov conjecture states that  that the Dirac induction map is an isomorphism of abelian groups. This was proved for complex semisimple Lie groups $G$ by Penington and Plymen in \cite{PP}, for connected linear reductive groups
by Wassermann \cite{Wassermann}, and for general almost connected Lie groups by Chabert, Echterhoff and Nest \cite{CEN}.

The reduction map for $K$ at a dominant weight $\lambda \in \Ldom$,
\[
R_K^{\lambda}: R(K) \to \Z,
\]
is defined by taking the multiplicity of the irreducible $K$-representation $V_{\lambda}$ with highest weight $\lambda$.
A restatement of the Connes-Kasparov conjecture is that $K_d(C^*_r(G))$ is the free abelian group with generators
\[
\{\clambda := \DInd_K^G[V_{\lambda}]; \lambda \in \Ldom\}.
\]
For $\lambda \in \Ldom$, let
\begin{equation} \label{eq RG}
R_G^{\lambda}: K_d(C^*_r(G)) \to \Z
\end{equation}
be the reduction map defined by
\[
R_G^{\lambda}\biggl(\sum_{\lambda' \in \Ldom} m_{\lambda'} c(\lambda') \biggr) = m_{\lambda}.
\]
By definition of this map and the generators $\clambda$, the following diagram commutes:
\begin{equation} \label{eq RG RK}
\xymatrix{
K_*(C^*_r(G)) \ar[dr]^-{R_G^{\lambda}} & \\
R(K) \ar[u]^{\DInd_K^G} \ar[r]_-{R_K^{\lambda}} & \Z.
}
\end{equation}

\subsection{Complex semisimple groups} \label{sec Ktheor CG}

In \cite{PP}, Penington and Plymen prove the Connes--Kasparov conjecture for complex semisimple Lie groups, via a very explicit description of the $K$-theory of the $C^*$-algebra of such groups, in terms of their principal series representations. We briefly recall the parts of their work we will use. For details and proofs, we refer to \cite{PP}, and references therein.

Suppose  $G$ is a complex semisimple Lie group. Let $\g = \kk \oplus \aaa \oplus \n$ be an Iwasawa decomposition at the Lie algebra level. The principal series representations of $G$ are parametrised by the set
\begin{equation} \label{eq decomp Gps}
\Gps = \bigcup_{\lambda \in \Ldom} E_{\lambda},
\end{equation}
where $E_{\lambda}$ is is a connected component of $\Gps$, homeomorphic to the quotient of $\aaa$ by a certain Weyl group action. This Weyl group is trivial if and only if $\lambda$ is \emph{regular}, i.e.\ lies inside $\Ldom + \rho$, where $\rho$ is half the sum of a choice of positive roots of $\g$ with respect to the standard Cartan subgroup $Z_{\kk}(\aaa) \oplus \aaa$.

By Proposition 4.1 in \cite{PP}, one has
\[
K_*(C^*_r(G)) = K^*(\Gps),
\]
the topological $K$-theory of $\Gps$. Therefore,
\[
K_d(C^*_r(G)) = \bigoplus_{\lambda \in \Ldom} \Z \cdot \pilamrho,
\]
with $\pilamrho \in K^d(E_{\lambda + \rho})$ the Bott generator, while $K_{d+1}(C^*_r(G)) = 0$.
We will interpret the $K$-theory class $\pilamrho \in K_*(C^*_r(G))$ as the generator associated to the connected component $E_{\lambda + \rho}$ of $\Gps$. 

In Section 5 of \cite{PP}, Penington and Plymen show that for complex semisimple Lie groups, the generators $\clambda$ of $K_d(C^*_r(G))$ equal the classes $\pilamrho$:
\begin{equation} \label{eq lambda pilamrho}
\clambda = \pilamrho.
\end{equation}
Therefore, the reduction map \eqref{eq RG} is now given by
\[
R_G^{\lambda}\biggl(\sum_{\lambda' \in \Ldom} m_{\lambda' + \rho} [\pi^p_{\lambda' + \rho}] \biggr) = m_{\lambda + \rho}.
\]

\section{Pre-Hamiltonian induction and quantisation}

In this section, we consider any connected Lie group $G$ with a maximal compact subgroup $K < G$, and an $\Ad(K)$-invariant subspace $\p \subset \g$ such that $\g = \kk \oplus \p$. We will of course later apply what follows to semisimple groups $G$. We equip $\g$ with any $\Ad(K)$-invariant inner product with respect to which $\kk \perp \p$.
 As in Subsection \ref{sec dirac ind}, we assume that the representation $\Ad: K \to \SO(\p)$ lifts to $\widetilde \Ad: K \to \Spin(\p)$, so that $G/K$ is a $\Spin$ manifold. 

\subsection{Pre-Hamiltonian induction}

In \cite{HochsDS}, the notion of \emph{Hamiltonian induction} was introduced, which assigns a Hamiltonian $G$-manifold $M = G\times_K N$, with an equivariant prequantisation, to a prequantised Hamiltonian $K$-manifold $N$. There is an inverse construction called \emph{Hamiltonian cross-section}. A crucial assumption in \cite{HochsDS} was that the momentum map $\Phi_M: M \to \g^*$ takes values in the strongly elliptic set $\gse$, i.e.\ the set of elements of $\g^*$ with compact stabilisers under the coadjoint action. We will now apply this construction to any semisimple group $G$, for which $\gse$ may be empty. Therefore, we drop the assumption that $\Phi_M(M) \subset \gse$. This has two main consequences for the Hamiltonian induction process:
\begin{itemize}
\item the two-form on $M$ induced by the symplectic form on $N$ may be degenerate;
\item Hamiltonian cross-sections are no longer well-defined (specifically, the subsets $N \subset M$ taken in this process may not be smooth submanifolds).
\end{itemize}
Because of the latter point, we will not be able to use Hamiltonian cross-sections, but need to \emph{assume} that $M$ has the form $G\times_K N$. The first point means we will have to deal with \emph{presymplectic manifolds}. There are different variations of the definition of presymplectic manifolds (sometimes the rank of the two-form considered is assumed to be constant). As in \cite{Bottacin, CdSKT,  GNH, GK1, GK2, KT}, we will simply consider manifolds with closed two-forms on them.
\begin{definition}
A \emph{presymplectic form} on a smooth manifold $M$ is a closed two-form $\omega \in \Omega^2(M)$. The pair $(M, \omega)$ is then called a \emph{presymplectic manifold}.
\end{definition}

The definition of a momentum map for an action by a Lie group on a presymplectic manifold is completely analogous to the symplectic case \cite{Bottacin, EEMLRR}. If such a map exists, we call the action \emph{pre-Hamiltonian}. Presymplectic reduction is then defined analogously to symplectic reduction.
A prequantisation of  a presymplectic manifold $(M, \omega)$ is also defined as in the symplectic case. In particular, we will use $\Spin^c$-prequantisation, which involves a line bundle $L^{2\omega} \to M$ with a connection whose curvature is $2\pi i \cdot 2\omega$, and a $\Spin^c$-structure\footnote{We will slightly abuse terminology, by using the term $\Spin^c$-structure on a manifold $X$ for a principal $\Spin^c$-bundle $P \to X$ such that $P\times_{\Spin^c(d_X)} \R^{d_X} \cong TX$, without explicitly referring to this isomorphism.} $P^M \to M$ with determinant line bundle $L^{2\omega}$.

In the presymplectic setting, we will call the process analogous to Hamiltonian induction \emph{pre-Hamiltonian induction}. It is defined completely analogous to Hamiltonian induction, as described in Sections 2 and 3 of \cite{HochsDS}. We will briefly review the constructions here.

For any Lie group $H$, let $\pHPS(H)$ be the set
of pre-Hamiltonian $H$-actions with equivariant $\Spin^c$-prequantisations and $\Spin^c$-structures, 
which consists of classes of septuples
\[
\bigl(M, \omega, \Phi_M, {L^{2\omega}}, (\relbar, \relbar)_{L^{2\omega}}, \nabla^M, P^M \bigr), 
\]
where
\begin{itemize}
\item $(M, \omega)$ is a presymplectic manifold, equipped with an $H$-action that preserves $\omega$;
\item $\Phi_M: M \to \h^*$ is a momentum map for this action;
\item $\bigl({L^{2\omega}}, (\relbar, \relbar)_{{L^{2\omega}}}, \nabla^M \bigr)$ is an $H$-equivariant $\Spin^c$-prequantisation of $(M, \omega)$;
\item $P^M \to M$ defines an $H$-equivariant $\Spin^c$-structure on $M$, with determinant line bundle $L^{2\omega}$, such that $M$ is complete in the Riemannian metric induced by $P^M$.
\end{itemize}
Two of such septuples are identified if there is an equivariant diffeomorphism between the manifolds in question, which relates the presymplectic forms, momentum maps, line bundles and metrics on them via pullback.\footnote{We do not explicitly require that such a diffeomorphism relates the connections and $\Spin^c$-structures of two such septuples to each other. All that is needed for the purposes of geometric quantisation is that it relates the curvatures of connections, and the determinant line bundles of $\Spin^c$-structures, and this follows from the other properties.}

We will also use the sets 
$\CpHPS(H)$, for which  $M/H$ should be compact, 
$\CHPS(H)$, for which in addition $\omega$ should be symplectic,  and
$\ECpHPS(H) \subset \CpHPS(H)$, for which $M$ should be even-dimensional.

\begin{definition}
\emph{Pre-Hamiltonian induction} is the map\footnote{The author wishes to point out that the occurrence of his initials in the notation for the pre-Hamiltonian induction map is purely coincidental.}
\[
\pHInd_K^G: \pHPS(K) \to \pHPS(G),
\]
given by 
\begin{multline} \label{eq ind}
\pHInd_K^G\bigl[N, \nu, \Phi_N, L^{2\nu}, (\relbar, \relbar)_{L^{2\nu}}, \nabla^N, P^N\bigr] = \\
	\bigl[M, \omega, \Phi_M,  L^{2\omega}, (\relbar, \relbar)_{L^{2\omega}}, \nabla^M, P^M \bigr],
\end{multline}
as defined below.
\begin{itemize}
\item The manifold $M = G \times_{K} N$ is the quotient of $G \times N$ by the $K$-action defined by $k(g, n) = (gk^{-1}, kn)$, for $k \in K$, $g \in G$ and $n \in N$.
\item The $G$-invariant two-form $\omega \in \Omega^2(M)$ is defined by 
\begin{equation} \label{eq ind form}
\omega_{[e, n]}\bigl(Tq(v + X), Tq(w + Y)\bigr) = \nu_n(v, w) - \langle \Phi_N(n), [X,Y]\rangle,
\end{equation}
for $n \in N$, $v, w \in T_nN$ and $X, Y \in \p$, where we note that $T_{[e,n]}M \cong T_nN \oplus \p$ via the tangent map $Tq$ of the quotient map $q: G\times N \to M$.
\item The momentum map $\Phi_M: M \to \g^*$ is defined by 
\[
\Phi_M([g, n]) = \Ad^*(g)\Phi_N(n),
\]
for $g \in G$ and $n \in N$.
\item The line bundle $L^{2\omega}$ equals $G \times_K L^{2\nu} \to M$.
\item The Hermitian metric $(\relbar, \relbar)_{L^{2\omega}}$ on $L^{2\omega}$ is given by
\[
\bigl([g, l], [g', l']\bigr)_{L^{2\omega}} = (l,l')_{L^{2\nu}},
\]
for $g, g' \in G$, $n \in N$ and $l, l' \in L^{2\nu}_N$.
\end{itemize}
The definitions of the connection $\nabla^M$ on $L^{2\omega}$ and the $\Spin^c$-structure $P^M$ on $M$ are more involved, and we refer to Section 3 of \cite{HochsDS} for details.
\end{definition}

We then have the following result, which can be proved completely analogously to Sections 2 and 3 in \cite{HochsDS}, omitting the assumption that momentum maps take values in strongly elliptic sets.
\begin{theorem}[pre-Hamiltonian induction] \label{thm pHInd}
Pre-Hamiltonian induction is well-defined, in the sense that $(M, \omega)$ is a presymplectic manifold, $\Phi_M$ is a momentum map, $\bigl(L^{2\omega}, (\relbar, \relbar)_{L^{2\omega}}, \nabla^M \bigr)$ is a $G$-equivariant $\Spin^c$-prequantisation of $(M, \omega)$, and $P^M \to M$ defines a $G$-equivariant $\Spin^c$-structure on $M$, with determinant line bundle $L^{2\omega}$.
\end{theorem}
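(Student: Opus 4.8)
The plan is to establish the four assertions of the theorem — that $(M,\omega)$ is presymplectic, that $\Phi_M$ is a momentum map, that the triple on $L^{2\omega}$ is a $G$-equivariant $\Spin^c$-prequantisation, and that $P^M$ defines a $G$-equivariant $\Spin^c$-structure with determinant line bundle $L^{2\omega}$ — by reproducing the arguments of Sections 2 and 3 of \cite{HochsDS}, checking at each step that the hypothesis $\Phi_M(M)\subset\gse$ imposed there is never actually used. In \cite{HochsDS} that hypothesis serves only to make the Hamiltonian cross-section construction available and to yield nondegeneracy of $\omega$, neither of which is claimed here. All the verifications are local computations at the base points $[e,n]\in M$, using the identification $T_{[e,n]}M\cong T_nN\oplus\p$ given by $Tq$, propagated to all of $M$ by $G$-invariance; the only consistency condition is $K$-equivariance at the points $[e,n]$, since $[k,n]=[e,kn]$ in $M=G\times_K N$.

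For the two-form $\omega$, formula \eqref{eq ind form} defines a smooth $G$-invariant two-form once one checks that its right-hand side is unchanged under $(g,n)\mapsto(gk^{-1},kn)$, which follows from $K$-invariance of $\nu$, $K$-equivariance of $\Phi_N$, $\Ad(K)$-invariance of the pairing on $\kk^*\times\kk$, and $\Ad(K)$-stability of $\p$. For closedness I would realise $q^*\omega$ on $G\times N$ as a representative of the minimal coupling form $\nu - d\langle\Phi_N,\theta_{\kk}\rangle$, with $\theta_\kk$ the $\kk$-component of the left Maurer--Cartan form: this form is closed simply because $d\nu=0$ and $d^2=0$, and, using the momentum map identity $\iota_{\xi_N}\nu = d\langle\Phi_N,\xi\rangle$ on $N$ together with the structure equation $d\theta=-\tfrac12[\theta,\theta]$, one checks it is horizontal and $K$-invariant for the free $K$-action on $G\times N$, hence descends to a closed form on $M$; a short computation with the structure equation then identifies the descended form with \eqref{eq ind form}, since $-\langle\Phi_N,[X,Y]\rangle = \langle\Phi_N, d\theta_\kk(X,Y)\rangle$ for $X,Y\in\p$. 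The momentum map identity $\iota_{\xi_M}\omega = d\langle\Phi_M,\xi\rangle$ for $\xi\in\g$ is then checked at $[e,n]$ by decomposing $\xi=\xi_\kk+\xi_\p$: the $\kk$-part reduces to the momentum map property of $\Phi_N$, the $\p$-part to the coupling term in \eqref{eq ind form}, and one passes to arbitrary points using $\Phi_M([g,n])=\Ad^*(g)\Phi_N(n)$ and the $G$-invariance of $\omega$.

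For the prequantisation and the $\Spin^c$-structure, the Hermitian metric on $L^{2\omega}=G\times_K L^{2\nu}$ descends because the metric on $L^{2\nu}$ is $K$-invariant; following Section 3 of \cite{HochsDS}, the connection $\nabla^M$ is assembled from $\nabla^N$ and the canonical principal connection on $G\to G/K$ determined by $\g=\kk\oplus\p$, so it is $G$-invariant, and a curvature computation structurally identical to the closedness argument gives that its curvature is $2\pi i\cdot 2\omega$. For $P^M$ one uses the $G$-equivariant splitting of $TM$ into the pullback of $T(G/K)$ along the projection $M\to G/K$ and the vertical bundle $G\times_K TN$: on the first summand take the $\Spin$-structure $G\times_K\Spin(\p)$ coming from the assumed lift $\widetilde{\Ad}\colon K\to\Spin(\p)$ (this is where that hypothesis enters), on the second the $\Spin^c$-structure induced from $P^N$, and let $P^M$ be their product. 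Its determinant line bundle is the trivial determinant of the first factor tensored with $G\times_K(\det P^N)=G\times_K L^{2\nu}=L^{2\omega}$, as required, and completeness of the induced metric holds because $G/K$ is complete, $N$ is compact, and the metric on $M$ is a bundle metric built from these two pieces.

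I expect the main obstacle to be bookkeeping rather than a new idea: the closedness of $\omega$ and the curvature identity for $\nabla^M$ are the computation-heavy steps, and the delicate part is carrying through the $K$-basicness and $K$-equivariance arguments cleanly on $G\times_K N$ while confirming that nowhere — neither in \eqref{eq ind form} nor in the connection and $\Spin^c$ constructions of \cite{HochsDS} — does the proof secretly exploit $\Phi_M(M)\subset\gse$. Once this is checked, the theorem follows essentially verbatim from the arguments of \cite{HochsDS}, with that hypothesis deleted.
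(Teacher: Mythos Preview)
Your proposal is correct and follows exactly the approach the paper takes: the paper simply states that the result ``can be proved completely analogously to Sections 2 and 3 in \cite{HochsDS}, omitting the assumption that momentum maps take values in strongly elliptic sets,'' and your outline carries out precisely this verification, correctly identifying that the strongly elliptic hypothesis in \cite{HochsDS} is used only for nondegeneracy of $\omega$ and for the Hamiltonian cross-section construction, neither of which is asserted here. If anything, your sketch is more detailed than what the paper provides.
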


\subsection{Quantisation commutes with induction}

As in \cite{HL, HochsDS, Landsman}, we define geometric quantisation as the analytic assembly map used in the Baum--Connes conjecture \cite{BCH} applied to the classes defined by Dirac operators in $K$-homology \cite{HR}.

Consider a class
\[
\bigl[M, \omega, \Phi_M,  L^{2\omega}, (\relbar, \relbar)_{L^{2\omega}}, \nabla^M, P^M \bigr]  \in \CpHPS(G).
\]
Then one has a $\Spin^c$-Dirac operator \cite{Duistermaat, Friedrich, LM}  $\D_M^{L^{2\omega}}$ on  $M$, coupled to the line bundle $L^{2\omega}$ in question via the given connection. This Dirac operator defines a $K$-homology class
$
\left[\D_M^{L^{2\omega}}\right] \in K_{d_M}^G(M).
$
\begin{definition}
The \emph{quantisation map}
\[
Q_G: \CpHPS(G) \to K_{d_M}(C_r^*(G)),
\]
is  defined by 
\[
Q_G\bigl[M, \omega, \Phi_M,  (\relbar, \relbar)_{L^{2\omega}}, \nabla^M, P^M \bigr]  = \mu_M^G\left[\D_M^{L^{2\omega}}\right],
\]
where the map $\mu_M^G$ is the analytic assembly maps
\end{definition}

The quantisation map
\[
Q_K: \CpHPS(K) \to K_{d_N}(C^*_r(K))
\]
is defined analogously. Restricting to even-dimensional manifolds $N$, and noting that $\mu_N^K: K_0^K(N) \to K_0(C^*_r(K))$ then equals the usual equivariant index
\[
\Kind:K_0^K(N) \to R(K),
\]
we have the following result.
\begin{theorem}[Quantisation commutes with pre-Hamiltonian induction] \label{thm quant ind}
The following diagram commutes:
\[
\xymatrix{
\CpHPS(G) \ar[r]^-{Q_G} & K_{d}(C^*_r(G)) \\
\ECpHPS(K) \ar[u]^{\pHInd_K^G} \ar[r]^-{Q_K} & R(K), \ar[u]_{\DInd_K^G}
}
\]
where $\DInd_K^G$ is the Dirac induction map \eqref{IGK}.
\end{theorem}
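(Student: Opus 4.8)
The plan is to reduce the statement to the naturality of the analytic assembly map under the induction construction $N\mapsto G\times_K N$, combined with an explicit decomposition of the $\Spin^c$-Dirac operator on $M$. First, from the identification $T_{[e,n]}M\cong \p\oplus T_nN$ built into \eqref{eq ind form}, one obtains a $G$-equivariant splitting $TM\cong (G\times_K\p)\oplus(G\times_K TN)$, and I would check that with respect to this splitting the $\Spin^c$-structure $P^M$ and connection $\nabla^M$ of Section 3 of \cite{HochsDS} are the ``product'' of the $\Spin$-structure on $G\times_K\p$ coming from $\widetilde\Ad$ and the induced $\Spin^c$-structure $G\times_K P^N$ on $G\times_K TN$, with determinant line bundle $L^{2\omega}=G\times_K L^{2\nu}$. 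On the corresponding spinor module $C^\infty(G)\otimes_K(\Delta_\p\otimes \SSS_N)$, with $\SSS_N$ the $\Spin^c$-spinor bundle of $N$, the operator $\D_M^{L^{2\omega}}$ then equals $\D_{G/K}\otimes 1 + 1\otimes\D_N^{L^{2\nu}}$ up to a $G$-invariant order-zero (hence $G$-bounded) term, where $\D_{G/K}$ is the operator \eqref{DV} with $V$ replaced by $\SSS_N$. Since a $G$-bounded perturbation does not change the class in $K^G_*(M)$, and the class of such a graded product of Dirac operators is the Kasparov product of the classes of the factors, it follows that $[\D_M^{L^{2\omega}}] = j_K^G[\D_N^{L^{2\nu}}]$, where $j_K^G\colon K^K_{*}(N)\xrightarrow{\ \sim\ } K^G_{*+d}(G\times_K N)$ is the standard induction isomorphism in equivariant $K$-homology, i.e.\ the Kasparov product with the $\Spin$-Dirac class of $G/K$.

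Next, as in \cite{HL, HochsDS}, the analytic assembly map intertwines $j_K^G$ with Dirac induction: the diagram
\[
\xymatrix{
K^K_*(N) \ar[r]^-{\mu_N^K} \ar[d]_-{j_K^G} & K_*(C^*_r(K)) \ar[d]^-{\DInd_K^G} \\
K^G_{*+d}(G\times_K N) \ar[r]^-{\mu_M^G} & K_{*+d}(C^*_r(G))
}
\]
commutes, where on the right $\DInd_K^G$ is regarded as a map $K_*(C^*_r(K))=R(K)\to K_{*+d}(C^*_r(G))$. For $N$ a point this is precisely the definition \eqref{IGK} of $\DInd_K^G$; in general it follows from associativity of the Kasparov product together with the standard functoriality of $\mu$, exactly as in \cite{HochsDS}, and this argument nowhere uses nondegeneracy of $\nu$. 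Combining the two steps: for even-dimensional $N$ one has $\mu_N^K=\Kind$, so $Q_K[N,\nu,\Phi_N,\dots]=\Kind[\D_N^{L^{2\nu}}]$, and therefore
\[
\DInd_K^G\bigl(Q_K[N,\nu,\dots]\bigr)=\mu_M^G\bigl(j_K^G[\D_N^{L^{2\nu}}]\bigr)=\mu_M^G[\D_M^{L^{2\omega}}]=Q_G\bigl(\pHInd_K^G[N,\nu,\dots]\bigr),
\]
which is the asserted commutativity of the diagram.

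The only substantive point is the identification in the first step: matching the explicit $\Spin^c$-structure $P^M$ and connection $\nabla^M$ of \cite{HochsDS} with the product decomposition, getting the $d$-fold degree shift and the Clifford and grading conventions right, and checking that the order-zero terms distinguishing the honest $\Spin^c$-Dirac operator on $M$ from the naive graded sum $\D_{G/K}\otimes 1+1\otimes\D_N^{L^{2\nu}}$ are $G$-invariant and bounded, so that they do not affect the class in $K^G_*(M)$. Granting this, the remaining steps are formal and identical to the corresponding arguments in \cite{HL, HochsDS}; in particular, the passage from the symplectic setting of \cite{HochsDS} to the present presymplectic one requires no change, since neither the decomposition of $\D_M^{L^{2\omega}}$ nor the naturality of $\mu$ sees the rank of $\nu$.
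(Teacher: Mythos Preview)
Your outline is correct and in fact more direct than the route the paper takes. The paper does \emph{not} argue via the induction isomorphism $j_K^G$ built from the $\Spin$-Dirac class on $G/K$. Instead, it corrects an error in \cite{HochsDS}: the operator $\D_{G,K}=\sum_{j\le d}X_j\otimes c(X_j)$ used there is not elliptic on $G$, so it does not define a class in $K^{G\times K}_*(G)$. The paper therefore replaces it by the full elliptic Dirac operator $\D_G=\sum_{j\le d_G}X_j\otimes c(X_j)$, runs the large diagram of \cite{HochsDS} with $[\D_G]\in K^{G\times K}_{d_G}(G)$, and then uses an operator homotopy $\D_{G,t}$ (interpolating between $\D_G$ and $\D_{G,0}=\D_{G,K}$) whose $K$-invariant part on $G\times N$ stays elliptic and at $t=0$ equals $\D_M^{L^{2\omega}}\otimes 1_{\Delta_\kk}$. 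This produces the desired identity only after tensoring both sides with the class $[\Delta_\kk]$, which is then cancelled using that $K_*(C^*_r(G))$ is torsion-free. Your approach avoids this $\Delta_\kk$ detour entirely by working on $M$ (where everything is elliptic from the start) and invoking the standard $j_K^G$; the price is that when you write ``exactly as in \cite{HochsDS}'' for the compatibility of $j_K^G$ with the assembly map, you are citing precisely the argument the present paper is repairing. Your formulation with the elliptic $G/K$-Dirac class does sidestep the specific ellipticity problem, but you should either point to an independent source for that naturality square or indicate explicitly why the flawed step in \cite{HochsDS} is not being used.
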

\begin{proof}
We will discuss  how the proof of Theorem 4.5 in \cite{HochsDS} should be modified to apply to the current setting. One modification
is a correction to the argument in \cite{HochsDS}, and another is the use of  the dimension $d$ of $G/K$. 

Consider the following version of Diagram (28) from \cite{HochsDS}:
\begin{equation}\label{diagram}
\xymatrix{
K_0^{G}(M) \ar[r]^{\mu_M^G} & K_0(C^*_r(G)) \\
K_0^{G \times \Delta(K)}(G \times N) \ar[r]^{\mu_{G \times N}^{G \times \Delta(K)}} \ar[u]_{V_{\Delta(K)}} & K_0(C^*_r(G \times K)) \ar[u]_{R^0_{K}} \\
K_0^{G \times K \times K}(G \times N) \ar[r]^{\mu_{G \times N}^{G \times K \times K}} \ar[u]^{\Res^{G \times K \times K}_{G \times \Delta(K)}}
		& K_0(C^*_r(G \times K \times K)) \ar[u]_{\Res^{G \times K \times K}_{G \times \Delta(K)}} \\
K_0^K(N) \ar[r]^{\mu_N^K} \ar@/^5pc/[uuu]^{\KInd_K^G} \ar[u]_{[{\D}_{G}] \times \relbar}
		& R(K).  \ar[u]_{\mu_{G}^{G \times K}[{\D}_{G}] \times \relbar} 
}
\end{equation}
The difference with Diagram (28) in \cite{HochsDS}, is that the operator $\D_{G, K}$ used in \cite{HochsDS}, which is not elliptic, is replaced by the operator
\[
\D_G := \sum_{j=1}^{d_G} X_j \otimes c(X_j)
\]
on $C^{\infty}(G) \otimes \Delta_{\g}$. Here $\{X_j\}_{j=1}^{d_G}$ is a basis of $\g$, orthonormal with respect to the standard inner product defined via the Killing form and a Cartan involution. Suppose $\{X_1, \ldots, X_d \}$ is a basis of $\p$, so that this notation is compatible with \eqref{DV}. 

Consider the action by $G\times K$ on $C^{\infty}(G) \otimes \Delta_{\g}$ given by
\[
\bigl((g, k)\cdot (f\otimes \delta)\bigr)(g') = f(g^{-1}g'k)\otimes \widetilde{\Ad}(k)\delta,
\]
where $g, g' \in G$, $k\in K$, $f \in C^{\infty}(G)$, $\delta \in \Delta_{\g}$, and $\widetilde{\Ad}$ denotes the composition
\[
K\xrightarrow{\widetilde{\Ad}} \Spin(\p) \hookrightarrow \Spin(\g).
\]
That is, $K$ acts trivially on the component $\Delta_{\kk}$ of $\Delta_{\g} = \Delta_{\kk} \otimes \Delta_{\p}$. Since $\D_G$ is elliptic, essentially self-adjoint,  $G\times K$-equivariant, and graded if $d_G$ is even,  it defines a class
\[
[\D_G] \in K_{d_G}^{G\times K}(G).
\]

Commutativity of the left hand part of Diagram \eqref{diagram} is the definition of the $K$-induction map $\KInd_K^G$.
By  Subsection 5.1, Lemma 5.1 and Theorem 5.2 in \cite{HochsDS}, the rest of Diagram \eqref{diagram} commutes as well.

By Lemma 6.2 in \cite{HochsDS} (with $\D_{G, K}$ replaced by $\D_G$), the map 
$\KInd_K^G$ maps the class $\bigl[\D_N^{L^{2\nu}} \bigr] \in K_0^K(N)$ to the class of the elliptic operator
\[
\bigl(\D_G \otimes 1 + 1\otimes\D_N^{L^{2\nu}}\bigr)^K
\]
on
\begin{equation} \label{eq invar sections}
\bigl(C^{\infty}(G) \otimes \Delta_{\g} \otimes \Gamma^{\infty}(N, \SSS_N \otimes L^{2\nu})\bigr)^K,
\end{equation}
with $\SSS_N$ the spinor bundle on $N$.

The space \eqref{eq invar sections} equals the space of sections of the bundle
\[
\bigl( (G \times \Delta_{\g}) \boxtimes (\SSS_N\otimes L^{2\nu}) \bigr)/K \to G\times_K N.
\]
Because $K$ acts trivially on the component $\Delta_{\kk}$ of $\Delta_{\g} = \Delta_{\kk} \otimes \Delta_{\p}$, the latter bundle equals
\[
\SSS_M \otimes L^{2\omega} \otimes \Delta_{\kk}\to M.
\]
(See Lemma 6.1 in \cite{HochsDS}.)

For $t\in [0,1]$, consider the operator
\[
\D_{G, t} := \sum_{j=1}^{d} X_j \otimes c(X_j) + t  \sum_{j=d+ 1}^{d_G} X_j \otimes c(X_j).
\]
 The point is that while this operator is only elliptic for $t>0$, the induced operator
\[
\bigl(\D_{G, t} \otimes 1 + 1\otimes\D_N^{L^{2\nu}}\bigr)^K
\]
on \eqref{eq invar sections} is also elliptic for $t = 0$. Indeed, by Proposition 4.7 in \cite{HochsDS}, it equals
\[
\bigl(\D_{G, 0} \otimes 1 + 1\otimes\D_N^{L^{2\nu}}\bigr)^K = \D_M^{L^{2\omega}} \otimes 1_{\Delta_{\kk}}.
\]

Therefore, varying $t$ from $1$ to $0$ yields an operator homotopy which implies that
\begin{equation} \label{eq KInd product}
\KInd_K^G\bigl[\D_N^{L^{2\nu}}\bigr] = \bigl[\D_M^{L^{2\omega}}\bigr] \otimes [\Delta_{\kk}] \quad \in K_{d_G}^G(M).
\end{equation}
Here $[\Delta_{\kk}] \in K_{d_K}^G(M)$ is the class of the zero operator on the trivial $G$- and $C_0(M)$-space $\Delta_{\kk}$. This space
 carries a natural $\Z_2$-grading if and only if $d_K$ is even, so it defines a class in the $d_K$'th $K$-homology group. Since 
 $\bigl[\D_M^{L^{2\omega}}\bigr] \in K_d^G(M)$, the product \eqref{eq KInd product} indeed lies in the $d_G$'th equivariant $K$-homology group of $M$.
We conclude that
\begin{equation} \label{eq quant ind 1}
\mu_M^G \circ \KInd_K^G \bigl[\D_N^{L^{2\nu}}\bigr] = Q_G(M)\otimes [\Delta_{\kk}]  \quad \in K_{d_G}(C^*_r(G)).
\end{equation}

The arguments in Subsection 5.4 of \cite{HochsDS} now imply that the right hand vertical maps in Diagram \eqref{diagram} map a class $[V] \in R(K)$ to
\begin{equation} \label{eq DInd}
\DInd_K^G[V] \otimes [\Delta_{\kk}] \quad \in K_{d_G}(C^*_r(G)).
\end{equation}
By  \eqref{eq quant ind 1} and \eqref{eq DInd}, commutativity of Diagram \eqref{diagram} implies that
\begin{equation} \label{eq quant ind 2}
Q_G(M)\otimes [\Delta_{\kk}] = \DInd_K^G \bigl(Q_K(N)\bigr) \otimes [\Delta_{\kk}]  \quad \in K_{d_G}(C^*_r(G)).
\end{equation}
Note that tensoring by the class $[\Delta_{\kk}]$ is the same as multiplying by the dimension of $\Delta_{\kk}$. 
So
since the $K$-theory of $C^*_r(G)$ has no torsion, \eqref{eq quant ind 2} reduces to
\[
Q_G(M) = \DInd_K^G \bigl(Q_K(N)\bigr) \quad \in K_d(C^*_r(G)),
\]
as required.
\end{proof}


\section{Proofs of the results}

We now specialise to the case where $G$ is a semisimple group, and later a complex semisimple group, or semisimple with discrete series. Proofs of the results in Subsections \ref{sec results} and \ref{sec special cases} are given in Subsection \ref{sec proof thm}, while the orbit methods in Subsection \ref{sec orbit results} are proved in Subsection \ref{sec orbit}.


\subsection{Quantisation commutes with reduction} \label{sec proof thm}

The proof of Theorem \ref{thm [Q,R]=0} is based on  the quantisation commutes with induction principle, 
Theorem \ref{thm quant ind}, and the fact that, in the symplectic setting, $\Spin^c$-quantisation commutes with reduction in the compact case (see Theorem 1.1 from \cite{Paradan}). This result states\footnote{This is the statement of the result in \cite{Paradan} if the image of the momentum map has nonempty intersection with the interior of the positive Weyl chamber. Otherwise, a more subtle $\rho$-shift should be used.} that the following diagram commutes for every dominant weight $\lambda  \in \Ldom$, with $\xi:= (\lambda+\rho_c)/i$:
\begin{equation} \label{eq [Q,R]=0 cpt}
\xymatrix{
\CHPS(K) \ar[r]^-{Q_K} \ar[d]_{\MWR_{\xi}} & R(K) \ar[d]^{R_K^{\lambda}}\\
\CHPS(\{e\}) \ar[r]^-{Q} & \Z. 
}
\end{equation}
Here $\MWR$ denotes Marsden-Weinstein reduction, including prequantisations and $\Spin^c$-structures. Note that we use only commutativity of this diagram for \emph{Hamiltonian} actions on symplectic manifolds, since it is not known in general if quantisation commutes with reduction for \emph{pre-Hamiltonian} actions on presymplectic manifolds.

Combining Diagram \eqref{eq RG RK} and 
Theorem \ref{thm quant ind} with Diagram \eqref{eq [Q,R]=0 cpt}, we obtain the following diagram:
\begin{equation} \label{eq diag proof}
\xymatrix{
\CpHPS(G) \ar[r]^-{Q_G} & K_{d}(C^*_r(G)) 
\ar@(dr, ur)[dd]^{R_G^{\lambda}}\\
\CHPS(K) \ar[r]^-{Q_K} \ar[d]_{\MWR_{\xi}}  \ar[u]^{\pHInd_K^G} & R(K) \ar[d]_{R_K^{\lambda}} \ar[u]^{\DInd_K^G}\\
\CHPS(\{e\}) \ar[r]^-{Q} & \Z. 
}
\end{equation}
Here we have applied the special case of Theorem \ref{thm quant ind} where the set $\ECpHPS(K)$ 
 is replaced by the subset $\CHPS(K)$.

Now let  $(N, \nu)$ be a compact, $\Spin^c$-prequantisable Hamiltonian $K$ manifold, and let $(M = G\times_K N, \omega)$ be the induced pre-Hamiltonian $G$-manifold  as in the definition of pre-Hamiltonian induction. By the Connes--Kasparov conjecture, there are integers $m_{\lambda}$, such that
\[
Q_G(M, \omega) = \sum_{\lambda  \in \Ldom} m_{\lambda} \clambda \quad \in K_{d}(C^*_r(G)).
\]
Commutativity of Diagram \eqref{eq diag proof} implies that, for all $\lambda  \in \Ldom$,
\[
m_{\lambda} = R_G^{\lambda} \bigl(Q_G(M, \omega)\bigr) 
	 = Q(N_{\xi}, \nu_{\xi}).
\]

Theorem \ref{thm [Q,R]=0} therefore follows from the following fact.
\begin{lemma} \label{lem sympl reds}
The map $j: N \to M$, given by 
\[
j(n)=[e, n],
\]
for $n \in N$,
induces a (pre)symplectomorphism between the symplectic reduction of $(N, \nu)$ by $K$ and the presymplectic reduction of $(M, \omega)$ by $G$, at $\xi$:
\[
j_G: (N_{\xi}, \nu_{\xi}) \xrightarrow{\cong} (M_{\xi}, \omega_{\xi}).
\]
In particular, the presymplectic reductions of $(M, \omega)$ are in fact symplectic. 
\end{lemma}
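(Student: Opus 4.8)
The plan is to identify the momentum level sets and their quotients directly, using the explicit formula $\Phi_M([g,n]) = \Ad^*(g)\Phi_N(n)$. First I would observe that $j(N) = \{[e,n] : n \in N\}$ meets $\Phi_M^{-1}(\xi)$ in exactly $j(\Phi_N^{-1}(\xi))$, since $\Phi_M([e,n]) = \Phi_N(n)$ (here I am identifying $\xi \in \kk^*$ with its image in $\g^*$ under the inclusion dual to $\g \to \kk$ coming from $\g = \kk \oplus \p$; note $\Phi_N$ takes values in $\kk^*$, so this is the natural identification). More importantly, I would show that \emph{every} $G$-orbit in $\Phi_M^{-1}(\xi)$ meets $j(N)$: given $[g,n] \in \Phi_M^{-1}(\xi)$, we have $\Ad^*(g)\Phi_N(n) = \xi$, and since $\Phi_N(n) \in \kk^*$ lies in (the interior of) the positive Weyl chamber after the standing assumption, while $\xi = (\lambda+\rho_c)/i$ also lies there, a standard argument with the coadjoint action of $G$ versus $K$ — using that $G = K\exp(\p)$ and that the $\p$-directions move $\kk^*$-values off $\kk^*$ — forces $g \in K \cdot G_\xi$, hence the orbit of $[g,n]$ meets $j(N)$ in a point $[e,n']$ with $n' \in \Phi_N^{-1}(\xi)$. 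This is the step I expect to be the main obstacle: controlling which $g \in G$ can carry a $\kk^*$-valued point to a given regular point of $\kk^*$, i.e.\ showing $\Phi_M^{-1}(\xi) \subset G \cdot j(\Phi_N^{-1}(\xi))$; it is essentially the "cross-section" property from \cite{HochsDS}, and I would either cite the relevant lemma there or reprove it using the decomposition $\g^* = \kk^* \oplus \p^*$ and positivity of $\xi$.

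Next I would compare stabilisers: for $n \in \Phi_N^{-1}(\xi)$, the $G$-stabiliser $G_{\xi}$ acts on the fibre, and I would check that the map $N_\xi = \Phi_N^{-1}(\xi)/K_\xi \to M_\xi = \Phi_M^{-1}(\xi)/G_\xi$ induced by $j$ is well-defined and bijective — injectivity because two points $[e,n], [e,n']$ with $n,n' \in \Phi_N^{-1}(\xi)$ lie in the same $G_\xi$-orbit iff $n,n'$ lie in the same $K_\xi$-orbit (again using $G_\xi = K_\xi \cdot (G_\xi \cap \exp\p)$ and that the noncompact part acts trivially on the base-point level, or more carefully that it does not preserve the $\kk^*$-value unless trivial), and surjectivity by the orbit-meeting statement above. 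I would then upgrade this bijection to a diffeomorphism by a dimension count together with smoothness of $j$ and of its inverse on the relevant strata, invoking that $\xi$ is regular (so the level sets are smooth and the quotients are manifolds).

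Finally I would check that $j_G$ intertwines the reduced two-forms. By construction $j^*\omega = \nu$: for $v,w \in T_nN \subset T_{[e,n]}M$ the defining formula \eqref{eq ind form} gives $\omega_{[e,n]}(Tq\,v, Tq\,w) = \nu_n(v,w)$ since the $\p$-components vanish. Pulling this back through the quotient maps $\Phi_N^{-1}(\xi) \to N_\xi$ and $\Phi_M^{-1}(\xi) \to M_\xi$, and using that the reduced forms are the unique forms whose pullbacks to the level sets agree with the pullbacks of $\nu$ and $\omega$ respectively, one gets $j_G^*\omega_\xi = \nu_\xi$. Since $\nu$ is symplectic and $\nu_\xi$ is its Marsden--Weinstein reduction at a regular value, $\nu_\xi$ is symplectic; transporting via the diffeomorphism $j_G$ shows $\omega_\xi$ is symplectic as well, giving the last sentence of the lemma.
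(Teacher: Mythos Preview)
Your approach is essentially the paper's: the paper sets up the same commutative diagram relating $(N,\nu)$, the level sets, and the reductions, observes $j^*\omega=\nu$, and then diagram-chases $q_M^*\omega_\xi=\iota_M^*\omega$ to $q_N^*(j_G^*\omega_\xi)=\iota_N^*\nu$, exactly as you do in your final paragraph. The only difference is that the paper works with the orbit-level description $N_\xi=\Phi_N^{-1}(K\cdot\xi)/K$ and $M_\xi=\Phi_M^{-1}(G\cdot\xi)/G$ rather than your point-stabiliser description, and it disposes of the diffeomorphism question in one sentence (``One can check that the induced map $j_G$ is a diffeomorphism''), whereas you actually work it out.

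One caution on the step you flag as the main obstacle. You propose to cite the cross-section lemma from \cite{HochsDS}, but that paper's version uses the strongly elliptic hypothesis, which has been explicitly dropped here (indeed, the paper remarks that Hamiltonian cross-sections are \emph{not} available in this generality). Your fallback---reproving it via $G=K\exp(\p)$---does work: if $\Ad(\exp X)\xi\in\kk^*$ with $\xi\in\kk^*$ and $X\in\p$, then applying the Cartan involution gives $\Ad(\exp(-X))\xi=\Ad(\exp X)\xi$, so $\exp(2X)\in G_\xi$; since $G_\xi$ is $\theta$-stable it has its own Cartan decomposition, forcing $X\in\p\cap\g_\xi$ and hence $\Ad(\exp X)\xi=\xi$. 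This yields $(G\cdot\xi)\cap\kk^*=K\cdot\xi$ for any $\xi\in\kk^*$, without regularity or positivity. (In particular, your appeal to the standing assumption ``$\Phi_N(n)$ lies in the interior of the positive Weyl chamber'' is both unnecessary and not actually what the assumption says---it only requires the image of $\Phi_N$ to \emph{meet} that interior.) With that argument in hand your proof is complete and matches the paper's.
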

\begin{proof}
Consider the diagram
\[
\xymatrix{
(N, \nu) \ar[r]^-{j} & (M, \omega) \\\
\bigl(\Phi_N^{-1}(K\cdot\xi), \nu|_{\Phi_N^{-1}(K\cdot\xi)}\bigr) \ar[u]^-{\iota_N} \ar[r]^-j \ar[d]_-{q_N}& \bigl(\Phi_M^{-1}(G\cdot\xi), \omega|_{\Phi_M^{-1}(G\cdot\xi)} \bigr) \ar[u]^-{\iota_M} \ar[d]_-{q_M}\\
\bigl(N_{\xi} = \Phi_N^{-1}(K\cdot\xi)/K, \nu_{\xi}\bigr) \ar[r]^-{j_G}& \bigl(M_{\xi} = \Phi_M^{-1}(G\cdot\xi)/G, \omega_{\xi}\bigr).
}
\]
Here $\iota_N$ and $\iota_M$ are the respective inclusion maps, and $q_N$ and $q_M$ denote quotient maps. One can check that the induced map $j_G$ is a diffeomorphism, and  the maps $j$ are symplectic: $j^*\omega = \nu$. By some diagram chasing, the fact that $q_M^*\omega_{\xi} = \iota_M^*\omega$  implies that 
\[
q_N^* (j_G^*\omega_{\xi}) = \iota_N^*\nu,
\]
so that $j_G^*\omega_{\xi} = \nu_{\xi}$, as required. 
\end{proof}

For $G$ complex semisimple, it follows from \eqref{eq lambda pilamrho}  that Theorem \ref{thm [Q,R]=0} implies Corollary \ref{cor [Q,R]=0 ps}.

Finally, suppose $G$ is a real semisimple Lie group with discrete series. Let $\pi^d_{\mu}$ be an irreducible discrete series representation of $G$, where $\mu$ is the Harish--Chandra parameter of $\pi^d_{\mu}$ such that $(\alpha, \mu) \geq 0$ for all compact positive roots $\alpha$.  Let $\pidmu \in K_0(C^*_r(G))$ be the associated generator. The comment below Lemma 2.2.1 in \cite{Lafforgue} implies that this generator is of the form
\[
\pidmu = \pm\clambda,
\]
for a $\lambda \in \Ldom$. Lemma 1.5 in \cite{HochsDS} then yields the more explicit expression
\begin{equation} \label{eq class ds}
\pidmu = (-1)^{d/2}c(\mu - \rho_c).
\end{equation}
Therefore, Theorem \ref{thm [Q,R]=0} indeed implies Corollary \ref{cor [Q,R]=0 ds}.


\subsection{Orbits and generators} \label{sec orbit}

Fix an element $\lambda \in \Ldom$, and write $\xi := (\lambda + \rho_c)/i$ as before. Consider the manifold
\[
\widetilde{M}^{\lambda} := G\times_{K}(K\cdot \xi),
\]
constructed from $N = K\cdot \xi$ as in the pre-Hamiltonian induction procedure.  Let $\omega$ be the closed two-form on $M^{\lambda}$ induced by the Kostant--Kirillov symplectic form $\nu$ on $K \cdot \xi$ as in \eqref{eq ind form}.  The proof of Proposition \ref{prop orbit} is based on the following fact.
\begin{lemma} \label{lem M lambda}
There is a $G$-equivariant symplectomorphism
\[
(M^{\lambda}, p^*\omega^{\lambda}) \cong (\widetilde{M}^{\lambda}, \omega).
\]
\end{lemma}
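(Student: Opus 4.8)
The plan is to construct the symplectomorphism $(M^\lambda, p^*\omega^\lambda) \cong (\widetilde M^\lambda, \omega)$ explicitly and then check it intertwines the two-forms. First I would recall the definitions: $M^\lambda = G/K_\xi$, and $\widetilde M^\lambda = G\times_K (K\cdot\xi)$ where $K\cdot\xi$ is the coadjoint orbit of $K$ through $\xi$, which is diffeomorphic to $K/K_\xi$ (here $K_\xi$ is the stabiliser of $\xi$ in $K$, and note $K_\xi = G_\xi \cap K$ since $\xi \in \kk^*$ is fixed by $K_\xi$). There is an obvious candidate map $\psi: \widetilde M^\lambda \to M^\lambda$: send $[g, k\cdot\xi] \mapsto gkK_\xi$ for $g\in G$, $k\in K$; one checks this is well-defined on the quotient by $K$ (if we replace $(g,k\xi)$ by $(gh^{-1}, h k\xi)$ for $h\in K$, then $ghkK_\xi = g(h k)K_\xi$ — wait, we need $h^{-1}\cdot(hk) = k$ so indeed $gh^{-1}\cdot hk K_\xi = gk K_\xi$, consistent), $G$-equivariant, and a diffeomorphism with inverse $gK_\xi \mapsto [g, \xi]$. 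This handles the underlying manifold identification.

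Next I would identify the two-forms. On the target, $p: G/K_\xi \to G/G_\xi \cong \O^\lambda = G\cdot\xi$ is the natural projection, and $\omega^\lambda$ is the Kostant--Kirillov form on $\O^\lambda$, so $p^*\omega^\lambda$ at the base point $eK_\xi$ is the pullback of $\nu^{\O^\lambda}_\xi(X,Y) = -\langle\xi,[X,Y]\rangle$ on $\g/\g_\xi$. On the source, by the definition \eqref{eq ind form} of the induced form, $\omega$ at $[e,\xi]$ is given by $\omega_{[e,\xi]}(Tq(v+X), Tq(w+Y)) = \nu_\xi(v,w) - \langle\Phi_N(\xi),[X,Y]\rangle$, where $N = K\cdot\xi$, $\Phi_N$ is the inclusion $K\cdot\xi \hookrightarrow \kk^*$ so $\Phi_N(\xi) = \xi$, and $\nu$ is the Kostant--Kirillov form of $K$ on $K\cdot\xi$, i.e.\ $\nu_\xi(v,w) = -\langle\xi,[V,W]_\kk\rangle$ for $v = $ image of $V$, $w = $ image of $W$, $V,W\in\kk$. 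So I need to show that under the tangent map of $\psi$ at $[e,\xi]$ — which identifies $T_{[e,\xi]}\widetilde M^\lambda \cong (\kk/\kk_\xi)\oplus\p$ with $T_{eK_\xi}M^\lambda \cong \g/\kk_\xi$ in the evident way (the $\kk/\kk_\xi$ summand maps to the $\kk$-part, the $\p$ summand to the $\p$-part) — the sum $-\langle\xi,[V,W]_\kk\rangle - \langle\xi,[X,Y]\rangle$ matches $-\langle\xi,[V+X, W+Y]_\g\rangle$. Expanding the right side gives $-\langle\xi,[V,W]\rangle - \langle\xi,[V,Y]\rangle - \langle\xi,[X,W]\rangle - \langle\xi,[X,Y]\rangle$, so I must check the cross terms $\langle\xi,[V,Y]\rangle$ and $\langle\xi,[X,W]\rangle$ vanish, which holds because $[\kk,\p]\subset\p$ and $\xi\in\kk^*$ annihilates $\p$ (using the $\Ad(K)$-invariant splitting $\g = \kk\oplus\p$ and the corresponding splitting of $\g^*$). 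By $G$-invariance of both forms and $G$-equivariance of $\psi$, agreement at one point suffices.

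The remaining subtlety, and the main obstacle, is that $\psi$ as I described lands in $G/K_\xi$ with its manifold structure, but I must make sure the factorization $G/K_\xi \to G/G_\xi$ and the identification $K\cdot\xi \cong K/K_\xi$ are compatible — i.e.\ that $K_\xi$ really is the full isotropy group appearing in $\widetilde M^\lambda = G\times_K(K\cdot\xi)$ as one crosses from the $K$-orbit picture to the $G$-quotient picture, and that $p\circ\psi$ really is the orbit map composed with $\O^\lambda \cong G/G_\xi$. Concretely: $\psi([g,k\xi]) = gkK_\xi$ and then $p(gkK_\xi) = gkG_\xi = g G_\xi$ (since $k\in K_\xi$? no — $k\in K$, not $K_\xi$), so $p\psi([g,k\xi]) = gkG_\xi$, while the orbit map sends $[g,k\xi]$ to $\Phi_{\widetilde M}([g,k\xi]) = \Ad^*(g)(k\cdot\xi) = \Ad^*(gk)\xi$, corresponding to $gkG_\xi$ — these agree. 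So the diagram is consistent; the care needed is just in tracking which stabiliser ($K_\xi$ vs.\ $G_\xi$) appears where, and verifying $K_\xi = K\cap G_\xi$ so that $K/K_\xi \hookrightarrow G/G_\xi$ is the orbit inclusion $K\cdot\xi \hookrightarrow G\cdot\xi$. I would also double-check smoothness and properness of $\psi$ (both are homogeneous-space maps or fibre-bundle maps, so this is routine) and invoke $G$-invariance to conclude. I expect the form-matching computation to be short once the identifications are set up correctly; the bookkeeping of the various quotients is where an error could creep in.
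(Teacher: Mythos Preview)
Your proposal is correct and follows essentially the same route as the paper: the same map $[g,k\xi]\mapsto gkK_\xi$ is used as the equivariant diffeomorphism, and the form identification reduces to the same key observation that the cross terms $\langle\xi,[V,Y]\rangle$ and $\langle\xi,[X,W]\rangle$ vanish because $[\kk,\p]\subset\p$ and $\xi\in\kk^*$ kills $\p$, after which $G$-invariance finishes the argument. The paper carries out the pointwise check at $[e,k\xi]$ for arbitrary $k\in K$ rather than just at $[e,\xi]$, but since $G$ acts transitively on $M^\lambda$ your single-point check already suffices; your extra verification that $p\circ\psi$ agrees with the induced momentum map $\tilde p$ is exactly what the paper records in its footnote.
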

\begin{proof}
The map
$
\widetilde{M}^{\lambda} \to M^{\lambda},
$
given by
\[
[g, k\cdot \xi] \mapsto gk K_{\xi}
\]
is a $G$-equivariant diffeomorphism. Under this identification, the map $p$ corresponds to the map\footnote{Note that the map $\tilde p$ (composed with the inclusion map $\O^{\lambda} \hookrightarrow \g^*$) is in fact the momentum map for the action  by $G$ on $\widetilde{M}^{\lambda}$ induced by the momentum map for the action by $K$ on $N$.}
$
\tilde p: \widetilde{M}^{\lambda} \to \O^{\lambda},
$
given by $\tilde p[g, k\cdot \xi] = gk\xi$.
It remains to show that $\tilde p^*\omega^{\lambda} = \omega$.

Let $X, X' \in \kk$ and $Y, Y' \in \p$ be given. For $k \in K$, a tangent vector in $T_{[e, k\xi]}M^{\lambda}$ has the form $Y + (X+ \kk_{\xi})$. We compute
\[
\begin{split}
\omega_{[e, k\xi]}\bigl(Y +  (X+ \kk_{\xi}), Y' +  (X'+\kk_{\xi})\bigr) &= \nu_{k\xi}(X+\kk_{\xi}, X' + \kk_{\xi}) - \langle k\cdot \xi , [Y, Y']\rangle \\
	&= \langle k\xi, [X, X'] + [Y, Y']\rangle \\
	& = \langle k \xi, [X + Y, X' + Y'] \rangle.
\end{split}
\]
Here we have used the facts that $[X, Y']$ and $[Y, X']$ are in $\p$, and $k \xi \in \kk^*$ annihilates $\p$.

On the other hand, one has
\[
T_{[e, k\xi]} \tilde p \bigl(Y +  (X+ \kk_{\xi}) \bigr) = X + Y + \kk_{\xi}
\]
(and similarly for $X'$ and $Y'$), so that
\[
\tilde p^*\omega^{\lambda}_{[e, k\xi]}\bigl(Y +  (X+ \kk_{\xi}), Y' + (X'+\kk_{\xi})\bigr) = \langle k\xi, [X+Y, X'+Y']\rangle.
\]
Therefore, the forms $\omega$ and $\tilde p^*\omega^{\lambda}$ are equal at points of the form $[e, k\xi]$, and hence on all of $M$ by $G$-invariance.
\end{proof}

Now we use the fact that
\[
Q_K(K\cdot \xi) = [V_{\lambda}] \quad \in R(K),
\]
the class of the irreducible representation $V_{\lambda}$ with highest weight $\lambda$. This is a consequence of the Borel--Weil(--Bott) theorem rather than of a general quantisation commutes with reduction theorem, since the momentum map of the action by $K$ on $K\cdot \xi$ has no regular values. For that reason, we no not obtain Proposition \ref{prop orbit} as a direct corollary to Theorem \ref{thm [Q,R]=0}, but Theorem \ref{thm quant ind} and  Lemma \ref{lem M lambda}  imply that
\[
Q_G(M^{\lambda}, p^*\omega^{\lambda}) = Q_G(\widetilde{M}^{\lambda}, \omega) 
 = \DInd_K^G \bigl(Q_K(K\cdot \xi) \bigr) 
 =  \DInd_K^G[V_{\lambda}] 
 = \clambda,
\]
as claimed.

Again, for complex semisimple groups $G$, the relation \eqref{eq lambda pilamrho} means that Proposition \ref{prop orbit} implies Corollary \ref{cor orbit ps} in that case.

In the setting of Corollary \ref{cor orbit ds}, it follows from \eqref{eq class ds} and Proposition \ref{prop orbit} that
\[
c(\mu-\rho_c) = Q_G(M^{\mu - \rho_c}, p^*\omega^{\mu - \rho_c}).
\]
However, since $\mu - \rho_c$ lies inside the strongly elliptic set, one has $G_{\xi} = K_{\xi}$, so that 
\[
M^{\mu - \rho_c} = G/G_{\mu - \rho_c} = \O^{\mu - \rho_c},
\]
with $p$ the identity map. Hence Corollary \ref{cor orbit ds} is true as well.

\bibliographystyle{amsplain}

\begin{thebibliography}{50}
\bibitem{BCH} {P.\ Baum, A.\ Connes and N.\ Higson}, `Classifying space for proper actions and $K$-theory of group $C^*$-algebras', {\em Contemp. math. }167 (1994) 241--291.
\bibitem{Bottacin} F.\ Bottacin, `A Marsden--Weinstein reduction theorem for presymplectic manifolds', \texttt{http://www.dmi.unisa.it/people/bottacin/www/pubbl.htm}.
\bibitem{CdSKT} A.\ Cannas da Silva, Y.\ Karshon and S.\ Tolman, `Quantization of presymplectic manifolds and circle actions',  {\em Trans.\  Amer.\ math.\ soc.} 352 (2000) 525-552.
\bibitem{CEN} J.\ Chabert, S.\ Echterhoff and R.\ Nest, `The Connes--Kasparov conjecture for almost connected groups and for linear $p$-adic groups',  Publ.\ Math.\ Inst.\ Hautes \'Etudes Sci.\ No.\ 97 (2003) 239Ð278.
\bibitem{Duistermaat} { J.J.\ Duistermaat}, {\em The heat kernel Lefschetz fixed point theorem for the $\Spin^c$-Dirac operator}, Progress in nonlinear differential equations and their applications 18 (Birkh\"auser, Boston,  1996).
\bibitem{EEMLRR} A.\ Echeverr\'ia--Enr\'iquez, M.C.\ Mu\~noz--Lecanda, N.\ Rom\'an--Roy, `Reduction of presymplectic manifolds with symmetry', \emph{Rev.\ Math.\ Phys.} 11(10) (1999) 1209--1247. 
\bibitem{Friedrich}
{ T.\ Friedrich}, {\em Dirac operators in Riemannian geometry}, Graduate studies in mathematics 25 (American mathematical society, Providence,  2000).
\bibitem{GNH} {M.J.\ Gotay, J.M.\ Nester and G.\ Hinds}, `Presymplectic manifolds and the Dirac--Bergmann theory of constraints', {\em J.\ math.\ phys.} 19 (1978) 2388--2399.
\bibitem{GK1} {M.\ Grossberg and Y.\ Karshon}, `Bott towers, complete integrability and the extended character of representations', {\em Duke  math.\ j. } 76 (1994) 23--58. 
\bibitem{GK2} {M.\ Grossberg and Y.\ Karshon}, `Equivariant index and the moment map for completely integrable torus actions', {\em Adv.\ math.\ } 133 (1998) 185--223. 
\bibitem{HR} { N.\ Higson and J.\ Roe}, {\em Analytic $K$-homology}, Oxford mathematical monographs (Oxford university press, New York, 2000).
\bibitem{HL} { P.\ Hochs and N.P.\ Landsman}, `The Guillemin-Sternberg conjecture for noncompact groups and spaces', {\em J. $K$-theory} 1 (2008) 473--533.
\bibitem{HochsDS} {P.\ Hochs}, `Quantisation commutes with reduction at discrete series representations of semisimple Lie groups', {\em Adv. math.} 222 (2009) 862--919.
\bibitem{HM} {P.\ Hochs and V.\ Mathai}, `Quantizing tame actions', ArXiv:1309.6760.
\bibitem{KT} Y.\ Karshon and S.\ Tolman,  `The moment map and line bundles over presymplectic toric manifolds', {\em  J. differential geom.} 38 (1993) 465-484.
\bibitem{Knapp} A.W.\ Knapp, \emph{Representation theory of semisimple groups}, Princeton University Press (Princeton, 1986).
\bibitem{Lafforgue} { V. Lafforgue}, `Banach $\KK$-theory and the Baum-Connes conjecture', {\em Proc. ICM Beijing} vol.\ 2 (2002) 795--812.
\bibitem{Landsman} { N.P.\ Landsman}, `Functorial quantization and the Guillemin-Sternberg conjecture',
{\em Twenty years of Bialowieza: a mathematical anthology} (eds
S.\ Ali, G.\  Emch, A.\ Odzijewicz, M.\ Schlichenmaier, \&\ S.\ Woronowicz, World scientific, Singapore, 2005) 23--45.
\bibitem{LM} { H.B.\ Lawson and  M.-L.\ Michelsohn}, \emph{Spin geometry} (Princeton university press, Princeton, 1989).
\bibitem{MZ}
V.\ Mathai and W.\ Zhang,  `Geometric quantization for proper actions' (with an appendix by Ulrich Bunke), {\em Adv. math.} 225 (2010) 1224-1247.
\bibitem{MW} { J.E.\ Marsden and A.\ Weinstein}, `Reduction of symplectic manifolds with symmetry',
{\em Rep. math. phys. }5 (1974) 121--130.
\bibitem{MaZ} {X.\ Ma and W.\ Zhang}, `Geometric quantization for proper moment maps', {\em C. r. acad. aci. Paris}, Ser. I 347 (2009) 389--394.
\bibitem{MaZ2} 
X.\ Ma and W.\ Zhang, `Geometric quantization for proper moment maps: the Vergne conjecture', {\em Acta Math.}, to appear,      ArXiv:0812.3989.
\bibitem{Meinrenken} { E.\ Meinrenken}, `Symplectic surgery and the $\Spin^c$-Dirac operator', {\em Adv. math. }134 (1998) 240--277.
\bibitem{MS} { E.\ Meinrenken and R.\ Sjamaar}, `Singular reduction and quantization', {\em Topology }38 (1999) 699-762.
\bibitem{Paradan1} { P.-E.\ Paradan}, `Localisation of the Riemann-Roch character', {\em J. funct. anal. }187 (2001) 442--509.
\bibitem{Paradan} P.-E.\ Paradan, `$\Spin$-quantization commutes with reduction', {\em J. symplectic geom.} 10 (2012) 389-422.
\bibitem{Paradan2} { P.-E.\ Paradan}, `$\Spin^c$-quantization and the $K$-multiplicities of the discrete series',
{\em Ann. sci. \'ecole norm. sup. }(4) 36 (2003) 805--845.
\bibitem{Paradan3} { P.-E.\ Paradan}, `Formal geometric quantization II', {\em Pacific j. of math.} 253 (2011)  169--212.
\bibitem{Paradan4}
 { P.-E.\ Paradan},
`Quantization commutes with reduction in the noncompact setting: the case of the holomorphic discrete series',
ArXiv:1201.5451.
\bibitem{PP} {M.G.\ Penington and R.J.\ Plymen}, `The Dirac operator and the principal series for complex semisimple Lie groups', \emph{J. funct. anal.} 53 (1983) 269--286.
\bibitem{TZ} { Y.\ Tian and W.\ Zhang}, `An analytic proof of the geometric quantization conjecture of Guillemin-Sternberg', {\em Invent. math. }132 (1998)   229--259.
\bibitem{Wassermann} { A.\ Wassermann}, `Une d\'emonstration de la conjecture de Connes--Kasparov pour les groupes de Lie connexes r\'eductifs', {\em  C. r. acad. sci. Paris } 304 (1987) 559--562.
\end{thebibliography}

\end{document}